\newcommand{\googlebooks}[1]{(preview at \href{https://books.google.com/books?id=#1}{google books})}
\newcommand{\numdam}[1]{}
\DeclareMathAlphabet{\mathpzc}{OT1}{pzc}{m}{it}
\def\semicolon{;}
\def\applytolist#1{
    \expandafter\def\csname multi#1\endcsname##1{
        \def\multiack{##1}\ifx\multiack\semicolon
            \def\next{\relax}
        \else
            \csname #1\endcsname{##1}
            \def\next{\csname multi#1\endcsname}
        \fi
        \next}
    \csname multi#1\endcsname}
\def\calc#1{\expandafter\def\csname c#1\endcsname{{\mathcal #1}}}
\def\bbc#1{\expandafter\def\csname bb#1\endcsname{{\mathbb #1}}}
\def\bfc#1{\expandafter\def\csname bf#1\endcsname{{\mathbf #1}}}
\def\sfc#1{\expandafter\def\csname s#1\endcsname{{\sf #1}}}
\def\fc#1{\expandafter\def\csname f#1\endcsname{{\mathfrak #1}}}
\def\fixtikzforbreqn#1#2{%
  \protected\edef#1{\noexpand\ifmmode\mathchar\the\mathcode`#2 \noexpand\else#2\noexpand\fi}%
}
\tikzset{vertex/.style = {shape=circle,draw,fill=black,inner sep=0pt,minimum size=5pt}}
\tikzset{edge/.style = {->,> = latex', bend right}}
\tikzset{
	super thick/.style={line width=3pt}
}
\tikzset{
    quadruple/.style args={[#1] in [#2] in [#3] in [#4]}{
        #1,preaction={preaction={preaction={draw,#4},draw,#3}, draw,#2}
    }
}
\tikzstyle{shaded}=[fill=red!10!blue!20!gray!30!white]
\tikzstyle{unshaded}=[fill=white]
\tikzstyle{empty box}=[circle, draw, thick, fill=white, opaque, inner sep=2mm]
\tikzstyle{annular}=[scale=.7, inner sep=1mm, baseline]
\tikzstyle{rectangular}=[scale=.75, inner sep=1mm, baseline=-.1cm]
\tikzstyle{mid>}=[decoration={markings, mark=at position 0.5 with {\arrow{>}}}, postaction={decorate}]
\tikzstyle{mid<}=[decoration={markings, mark=at position 0.5 with {\arrow{<}}}, postaction={decorate}]
\tikzstyle{over}=[double, draw=white, super thick, double=]
\tikzstyle{snake}=[decorate, decoration={snake, segment length=1mm, amplitude=.3mm}]
\tikzstyle{saw}=[decorate, decoration={saw, segment length=.7mm, amplitude=.25mm}]
\tikzstyle{coupon}=[draw, very thick, rectangle, rounded corners=5pt]
\tikzset{Rightarrow/.style={double equal sign distance,>={Implies},->},
triplecd/.style={-,preaction={draw,Rightarrow}},
quadruplecd/.style={preaction={draw,Rightarrow,
shorten >=0pt
},
shorten >=1pt,
-,double,double
distance=0.2pt}}
\tikzset{
    tripleline/.style args={[#1] in [#2] in [#3]}{
        #1,preaction={preaction={draw,#3},draw,#2}
    }
}
\tikzstyle{triple}=[tripleline={[line width=.15mm,black] in
\tikzset{
    quadrupleline/.style args={[#1] in [#2] in [#3] in [#4]}{
        #1,preaction={preaction={preaction={draw,#4},draw,#3}, draw,#2}
    }
}
\tikzstyle{quadruple}=[quadrupleline={[line width=.3mm,white] in
\theoremstyle{plain}
\newtheorem{thm}{Theorem}[section]
\newtheorem*{thm*}{Theorem}
\newtheorem{cor}[thm]{Corollary}
\newtheorem*{cor*}{Corollary}
\newtheorem*{conj*}{Conjecture}
\newtheorem{lem}[thm]{Lemma}
\newtheorem*{lem*}{Lemma}
\newtheorem{prop}[thm]{Proposition}
\newtheorem*{quest*}{Question}
\newtheorem*{claim*}{Claim}
\theoremstyle{definition}
\newtheorem{defn}[thm]{Definition}
\newtheorem{cons}[thm]{Construction}
\newtheorem{sub-ex}[thm]{Sub-Example}
\newtheorem{counter-ex}[thm]{Counter-Example}
\newtheorem{rem}[thm]{Remark}
\newtheorem*{rem*}{Remark}
\newtheorem{remark}[thm]{Remark}
\definecolor{dark-red}{rgb}{0.7,0.25,0.25}
\definecolor{dark-blue}{rgb}{0.15,0.15,0.55}
\definecolor{medium-blue}{rgb}{0,0,.8}
\definecolor{DarkGreen}{RGB}{0,150,0}
\definecolor{rho}{named}{red}
\newcommand{\Hilb}{\mathsf{Hilb}}
\newcommand{\rCorr}{{\mathsf{C^*Alg}}}
\def\altdb{\vadjust{\vbox to 0pt{\vss\hbox{\kern \hsize
\quad{\dbend}}\kern\baselineskip\kern-10pt}}}
\newcommand{\noshow}[1]{}
\renewcommand{\MR}[1]{}
\begin{document} 
\title{Quantum symmetries of noncommutative tori }
\author{David E. Evans, Corey Jones}
\address{David E Evans, School of Mathematics, Cardiff University, Senghennydd Road, Cardiff CF24 4AG, Wales, United Kingdom}
\address{Corey Jones, Department of Mathematics,
North Carolina State University, Raleigh, NC 27695, USA}
\dedicatory{This paper is dedicated to the memory of Huzihiro Araki}

\begin{abstract}
We consider the problem of building non-invertible quantum symmetries (as characterized by actions of unitary fusion categories) on noncommutative tori. We introduce a general method to construct actions of fusion categories on inductive limit C*-algberas using finite dimenionsal data, and then apply it to obtain AT-actions of arbitrary Haagerup-Izumi categories on noncommutative 2-tori, of the even part of the $E_{8}$ subfactor on a noncommutative 3-torus, and of $\text{PSU}(2)_{15}$ on a noncommutative 4-torus.

\end{abstract}

\maketitle

\section{Introduction}

Fusion categories are algebraic objects that characterize generalized finite symmetries in many areas of quantum physics and mathematics. They are the structures underlying the standard invariants of finite depth subfactors, and characterize the algebraic structure of superselection sectors in quantum field theory. In both of these contexts, a fusion category ``acts by bimodules" on a von Neumann (or C*) algebra $A$, generalizing the notion of a (cocycle) group action. This provides a framework for non-invertible quantum symmetries. More precisely, an \textit{action} of a unitary fusion category $\mathcal{C}$ on a C*-algebra $A$ is a C*-tensor functor from $\mathcal{C}$ to the C*-tensor category $\text{Bim}(A)$ of bimodules.\footnote{by bimodule of C*-algebra we mean a correspondence and we also assume our C*-algebras are unital} 

This perspective on quantum symmetries originated in Vaughan Jones' theory of subfactors \cite{MR0696688}. Popa's fundamental results on subfactors translate to a complete classification of fusion category actions on amenable factors \cite{MR1055708, MR1339767}. Much less is known for fusion category actions on simple amenable C*-algebras, and the situation is expected to be significantly more complicated. Recently, however, there has been significant interest in this direction \cite{MR1900138, chen2022ktheoretic, arano2023tensor, pacheco2023elliott, 2105.05587, izumi2024gkernels, pacheco2023classification} (see the introduction to \cite{gabe2023dynamical} for a guide to recent progress on the closely related problem of classifying group actions on amenable C*-algebras). Using the machinery of subfactor theory, it is fairly straightforward to construct (and even classify in some instances) a wide variety of actions on AF-algebras \cite{MR996454, MR1642584, MR1055708, chen2022ktheoretic} and Cuntz-Kreiger algebras \cite{MR1604162, MR1832764}. However, there appears to be a lack of examples in the literature beyond these cases.

A natural family of stably finite amenable C*-algebras to consider beyond the AF case are noncommutative tori. Given a skew-symmetric real $n\times n$ matrix $\Theta$, $A_{\Theta}$ is the universal C*-algebra generated by $n$ unitaries $U_{1},\dots U_{n}$ satisfying $U_{k}U_{j}=e^{2\pi i \Theta_{j,k}}U_{j}U_{k}$. This is the algebra of continuous functions on the noncommutative n-torus. If the $\mathbbm{R}^{\times}$ valued bicharacter $b_{\Theta}(x,y):=e^{2\pi i \langle x, \Theta y\rangle}$ on $\mathbbm{Z}^{n}$ is nondegenerate, then $A_{\Theta}$ is a simple nuclear AT C*-algebra with unique trace \cite{phillips2006simple}. The algebras $A_{\Theta}$ (and their smooth subalgebras) have wide-ranging applications in both condensed matter \cite{MR1295473, MR0862832} and high-energy physics \cite{MR1613978, RevModPhys.73.977}, and are the paradigmatic example in Connes' noncommutative geometry \cite{MR1303779}.

The motivation for this paper is to address the problem of finding genuine ``quantum" symmetries of noncommutative tori. The canonical examples of fusion categories beyond the group theoretical setting are the fusion categories $SU(2)_{k}$, which play an important role in subfactor theory and conformal field theory. Our investigation starts with a no-go result, which uses elementary ordered K-theory and a bit of number theory: 

\begin{thm} The fusion categories $SU(2)_{k}$ with $k+2$ prime and $k+1$ not a power of $2$ admit no action on any noncommutative torus of any rank.
\end{thm}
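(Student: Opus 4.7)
My plan is to derive a contradiction from K-theoretic invariants. Suppose for contradiction that $SU(2)_k$ acts by bimodules on a noncommutative $n$-torus $A_\Theta$, and let $\tau$ be a tracial state on $A_\Theta$ (which is unique if $b_\Theta$ is nondegenerate). The C*-tensor functor $F:SU(2)_k\to\operatorname{Bim}(A_\Theta)$ induces a ring action of the fusion ring $K_0(SU(2)_k)$ on $K_0(A_\Theta)$ via $[X]\cdot[P]:=[F(X)\otimes_A P]$. The key fact I will use is that tensoring with a bimodule scales any trace by the bimodule's categorical dimension, and for unitary fusion categories this coincides with the Frobenius--Perron dimension; hence pushforward under $\tau$ turns the image $G:=\tau_*(K_0(A_\Theta))\subseteq\mathbb{R}$ into a module over $R:=\mathbb{Z}[2\cos(\pi/(k+2))]$, with $R$ acting by ordinary multiplication in $\mathbb{R}$.

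Because $1=\tau(1_A)\in G$ and $R\hookrightarrow\mathbb{R}$, this $R$-action on $G$ is faithful: any $r\in R$ killing $G$ satisfies $r\cdot 1=0$ in $\mathbb{R}$. Tensoring with $\mathbb{Q}$, the number field $F:=\mathbb{Q}(2\cos(\pi/(k+2)))$ then acts faithfully on the finite-dimensional $\mathbb{Q}$-vector space $G\otimes_\mathbb{Z}\mathbb{Q}$, equipping it with an $F$-vector space structure. Consequently $[F:\mathbb{Q}]$ divides $\dim_\mathbb{Q}(G\otimes_\mathbb{Z}\mathbb{Q})=\operatorname{rank}_\mathbb{Z}G\leq\operatorname{rank}_\mathbb{Z}K_0(A_\Theta)=2^{n-1}$, where the last equality is Elliott's computation of the K-theory of noncommutative tori via Pimsner--Voiculescu.

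The rest is number theory. With $p:=k+2$ an odd prime, $2\cos(\pi/p)=\zeta+\zeta^{-1}$ for a primitive $2p$-th root of unity $\zeta$, so $[F:\mathbb{Q}]=\varphi(2p)/2=(p-1)/2=(k+1)/2$. Therefore $(k+1)/2$ must divide $2^{n-1}$. Since $p$ is odd, $k+1=p-1$ is even and $(k+1)/2$ is a positive integer; but the hypothesis that $k+1$ is not a power of $2$ forces $(k+1)/2$ to contain an odd prime factor, which contradicts divisibility by any power of $2$. This rules out the action for every $n$.

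The main technical step to pin down is the trace-scaling identity $\tau_*([F(X)\otimes_A P])=\operatorname{FPdim}(X)\,\tau_*([P])$; I expect this to follow from the compatibility of the tracial state on $A_\Theta$ with the standard categorical traces in $\operatorname{Bim}(A_\Theta)$ induced by the C*-tensor functor from $SU(2)_k$, together with the coincidence of categorical and Frobenius--Perron dimensions in any unitary fusion category.
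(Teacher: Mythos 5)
Your number theory is right and the general K-theoretic strategy is in the spirit of the paper's, but there is a genuine gap at the decisive step. From ``$[F:\mathbb{Q}]$ divides $\operatorname{rank}_{\mathbb{Z}}G$ and $\operatorname{rank}_{\mathbb{Z}}G\le 2^{n-1}$'' you conclude that $(k+1)/2$ divides $2^{n-1}$; this does not follow. The group $G=\tau_*(K_0(A_\Theta))$ is only a quotient of $K_0(A_\Theta)$, and the rank of the range of the trace on $K_0$ of a noncommutative torus need not be a power of $2$: it can be any integer between $1$ (e.g.\ the commutative torus, where $G=\mathbb{Z}$) and $2^{n-1}$, so nothing prevents $\operatorname{rank}_{\mathbb{Z}}G$ from equalling, say, exactly $(k+1)/2$ once $2^{n-1}\ge (k+1)/2$. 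As written, your argument only excludes tori of small rank. A secondary issue is that the deferred trace-scaling identity amounts to $\tau\circ\operatorname{ind}_X$ being proportional to $\tau$, which relies on uniqueness of the trace and is therefore unavailable for degenerate $\Theta$, even though the theorem is asserted for every noncommutative torus.

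The paper closes exactly this gap by putting the module structure on all of $K_0(A_\Theta)\otimes\mathbb{Q}\cong\mathbb{Q}^{2^{n-1}}$ rather than on the image of the trace, and it never invokes a trace. An action of $SU(2)_k$ restricts to one of the full subcategory $PSU(2)_k$, which has rank $(k+1)/2$; a lemma of Schopieray shows that when $k+2$ is prime the dimensions $[1]_q,\dots,[\tfrac{k+1}{2}]_q$ of its simples are rationally independent, so $\mathbb{Q}[PSU(2)_k]$ is isomorphic (via $\operatorname{FPdim}$) to the \emph{field} $\mathbb{Q}(2\cos(\pi/(k+2)))$ of degree $(k+1)/2$. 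Every module over a field is free, so $\mathbb{Q}^{2^{n-1}}$ is a free $\mathbb{Q}[PSU(2)_k]$-module and $(k+1)/2$ genuinely divides $2^{n-1}$. This is also why the reduction to $PSU(2)_k$ is essential and cannot be skipped: $\mathbb{Q}[SU(2)_k]$ itself is a product of several number fields, so $K_0\otimes\mathbb{Q}$ need not be free over it, and your trace pushforward only detects the single field factor cut out by $\operatorname{FPdim}$. If you replace $G$ by the full rationalized $K_0$ and restrict to the even part, your argument becomes the paper's.
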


This result demonstrates (perhaps unsurprisingly) that noncommutative tori are not universal for finite quantum symmetries. Obtaining positive results about the existence of actions is more difficult, and as the above no-go result demonstrates, will necessarily depend on the details of the fusion category in question.

The basic idea we exploit in this paper comes from the results of \cite{MR1247990, phillips2006simple}, which demonstrate that simple non-commutative tori are inductive limits of C*-algebras of the form $C(\mathbbm{T}, A)$, where $A$ is a finite dimensional C*-algebra. In other words, simple noncommutative tori are \textit{AT-algebras}. Our strategy consists of building actions of fusion categories on ``building block" C*-algebras $C(\mathbbm{T}, A)$, and then choosing sufficiently nice connecting maps between these blocks to build an inductive limit action on an AT-algebra, which can be identified with a noncommutative torus via general $K$-theoretic classification results for AT-algebras. In particular, Elliott \cite{MR1462852, MR1403992} gave a complete classification of simple AT-algebras through the ordered $K_0$-group, $K_1$, the simplex of tracial states and its pairing with $K_0$, which we can apply in our setting.
In general, we call actions built from circle algebras in this way \textit{AT-actions} \ref{def:AF-AT-actions}. The technical result we will use to produce our actions is summarized in Theorem \ref{mainextension}, which gives a construction of an AT action from some basic categorical data. 

Our methods are very general and can be easily applied to build actions of fusion categories on a wide range of amenable C*-algebras, realized as inductive limits of elementary building blocks. In this paper, we apply our framework to obtain the following existence results for quantum symmetries on noncommutative tori:

\bigskip

\begin{enumerate}
\item
(Section \ref{sec:HaagIz}). Any Haagerup-Izumi fusion category $\mathcal{H}$ over any abelian group $G$ admits an AT-action on a simple noncommutative 2-torus.

\bigskip

\item 
(Section \ref{sec:AdE8}). The even part of an $\text{E}_{8}$ subfactor admits an AT-action on a simple noncommutative 3-torus.

\bigskip

\item 
(Section \ref{sec:PSU(2)}). The fusion category $\text{PSU}(2)_{15}$ admits an AT-action on a simple noncommutative 4-torus.
\end{enumerate}

In each of the cases mentioned above, we actually produce two a-priori different actions of each of the fusion categories on noncommutative tori (see Remark \ref{rem:differentactions}). One is manifestly an AT-action, but for the other it is not clear if it is AT. Determining whether these actions are equivalent is an interesting test case for any proposed classification of fusion category actions beyond the AF-setting.

One of the main motivations for considering actions of fusion categories on noncommutative tori is the ubiquity of these algebras in applications of operator algebras to mathematical physics, as mentioned above. Our hope is that we can give concrete applications of fusion categorical symmetries in these situations. We plan to pursue this in future work.

\section{Actions of unitary tensor categories on C*-algebras}

Recall that a unitary tensor category is a rigid C*-tensor category with simple unit object (our C*-tensor categories are always assumed to be unitarily Cauchy complete, i.e. closed under taking orthogonal direct sums and summands). A \textit{unitary fusion category} is a unitary tensor category with finitely many isomorphism classes of simple objects. Standard examples are $\text{Rep}(\mathbbm{G})$ where $\mathbbm{G}$ is a finite (quantum) group, and the categories $\mathcal{C}(\mathfrak{g},k)$, where $\mathfrak{g}$ is a simple Lie algebra, $k\in\mathbbm{N}$, which arise in the contexts of conformal field theory and Drinfeld-Jimbo quantum groups (see \cite{MR4642115, MR4079742} for overviews). In this paper, we typically use the ``loop group" notation and identify $\text{SU}(2)_{k}=\mathcal{C}(\mathfrak{sl}_{2},k)$.

Associated to a unital C*-algebra $A$ is a C*-tensor category $\text{Bim}(A)$. It consists of all (right) $A$-$A$ C*-correspondences, which are finitely generated projective as right Hilbert modules. The relative tensor product is denoted $\boxtimes_{A}$. See \cite[Section 2.2]{MR4419534} and references therein for definitions and discussion related to C*-correspondences.

\begin{defn}
An action of a unitary fusion category $\cC$ on a unital C*-algebra is a C*-tensor functor $F: \cC\rightarrow \text{Bim}(A)$. 
\end{defn}

We will often use the notation We use the notation $\cC\overset{F}{\curvearrowright} A$. Actions of fusion categories generalize (cocyle) group actions. Indeed, any $G$ action on a C*-algebra yields an action of the fusion category $\text{Hilb}(G)$ of G-graded finite dimensional Hilbert spaces on $A$.

\begin{rem}\label{finitedimensionalaction}Actions of fusion categories on finite dimensional C*-algebras can be completely understood in terms of their module categories via an equivalence of 2-categories. We will sketch how this works at the level of objects, but for a more detailed explanation, see \cite[Proposition 3.4]{chen2022ktheoretic}).

Let $\mathcal{C}$ be a unitary fusion category.

\begin{itemize}

\item 
Given a (right) unitary, finitely semisimple module category $\mathcal{M}$, pick a progenerator (or in other words, $m\in \mathcal{M}$ which contains every simple object in $\mathcal{M}$ up to isomorphism. Then $A:=\text{End}(m)$ is a finite dimensional C*-algebra and $\mathcal{M}\cong \text{Rep}(A)$, where the latter denotes the C*-category of (right) finite dimensional Hilbert space representations of $A$. Thus the C*-tensor category of $*$-endofunctors $\text{Bim}(A)^{op}\cong \text{End}(\mathcal{M})$ with $X\mapsto \cdot \boxtimes_{A} X $. Hence a right module category gives rise to an action on the finite dimensional algebra $A$.

\item 
Conversely, given an action $F:\mathcal{C}\rightarrow \text{Bim}(A)$, the category $\mathcal{M}:=\text{Rep}(A)$ is a finitely semisimple, C*-category. The equivalence $\text{Bim}(A)^{op}\cong \text{End}(\mathcal{M})$ as above leads to a right $\mathcal{C}$-module structure on $\mathcal{M}$. This construction is inverse to the one above.

\item For a C$^*$-algebra $A$, an $A$-$A$ bimodule of finite Jones index type yields an element of the Kasparov group $KK(A,A)$ and so an element of $\text{End}(K_0(A))$ \cite{MR1624182}. Hence an action of $\mathcal{C}$ on $A$, yields a unital homomorphism from $\text{Fus}(\mathcal{C})$ to $\text{End}(K_0(A))$.

 \end{itemize}

Typically, when a new class of fusion categories is discovered, one of the first problems to consider is a characterization of all their module categories. This has resulted in a fairly comprehensive understanding of all actions of fusion categories on finite-dimensional C*-algebras.
\end{rem}


\subsection{Inductive limit actions}

One way to build an action of a fusion category $\cC$ on a C*-algebra $A$ is to realize $A$ as the inductive limit of a sequence of C*-algebras $A_{1}\rightarrow_{\iota_{1}} A_{2}\rightarrow_{\iota_{2}} \cdots \rightarrow A$, and build actions at each level. We then equip the connecting maps with ``$\cC$-equivariant" structures. This will yield an action of $\cC$ on $A$ as in \cite[Proposition 4.4]{chen2022ktheoretic}, \cite[Section 4]{pacheco2023elliott}. 

When each level in an inductive limit decomposition is finite-dimensional, the resulting algebras are AF-algebras and the resulting actions are called \textit{AF-actions}. Given a unitary fusion category, these are all realized using the combinatorial data of $\cC$ and its module categories (as explained in the previous section) via a generalized Bratteli diagram \cite[Definition 4.10]{chen2022ktheoretic}. Furthermore, such actions can be completely classified using K-theoretic invariants, which is the main result in \cite{chen2022ktheoretic}.

Our goal is to use inductive limit approximations to build actions on algebras beyond the class of AF-algebras, but using the AF-actions as a starting point. Indeed, all classifiable C*-algebras can be realized as inductive limits of C*-algebras with relatively simple building blocks \cite{MR1241132, MR1661611}. This is also true for many cases of group actions as well \cite[Section 6]{MR1053492}, \cite[Section 4]{MR3572256} and so it is conceivable that this approach will lead to a construction of all possible actions (at least, that are ``classifiable" in some yet-to-be-determined sense).

The following gives a suitable notion of ``equivariant homomorphism" between unital C*-algebras equipped with a $\mathcal{C}$-action.

\begin{defn}\label{EquivariantMorphism} \cite[Lemma 3.8]{chen2022ktheoretic} Let $\cC\overset{F}{\curvearrowright} A$ and $\cC\overset{G}{\curvearrowright} B$ be actions of a fusion category on unital $\rm C^*$-algebras,
and let $\iota: A\rightarrow B$ be an injective, unital $\rm C^*$-algebra homomorphisms. An equivariant structure on $\iota$ consists of a family of linear maps 
$$\{j^{X}: F(X)\rightarrow G(X)\}_{X\in\cC}$$ such that 
\begin{enumerate}[label=(\arabic*)]
\item $j^{X}(a\rhd x \lhd a')=\iota(a)\rhd j^{X}(x)\lhd \iota(a')$ for $a,a'\in A$ and $x\in F(X)$.
\item
For any morphism $f\in \cC(X\rightarrow Y)$, $G(f)\circ j^{X}=j^{Y}\circ F(f)$;
\item
$\iota(\langle x | y\rangle_A)=\langle j^{X}(x) | j^{X}(y)\rangle_B$ for $x,y\in F(X)$;
\item
$j^{X}(F(X))B=G(X)$;
\item The following diagram commutes:
\[
\begin{tikzcd}
F(X)\otimes_{\mathbbm{C}} F(Y)
\arrow[swap]{d}{j^{X}\otimes j^{Y}}
\arrow{r}{}
& F(X)\boxtimes_{A} F(Y)
\arrow{r}{\mu^{F}_{X,Y}}
& F(X\otimes Y)\arrow{d}{j^{X\otimes Y}}
 \\
G(X)\otimes_{\mathbbm{C}}G(Y)
\arrow{r}{}
& G(X)\boxtimes_{B} G(Y)
\arrow{r}{\mu^{G}_{X,Y}}
& G(X\otimes Y)
\end{tikzcd}
\]

For $v\in F(X), w\in F(Y)$, this reads $j^{X\otimes Y}(\mu^{F}_{X,Y}(v\boxtimes_{A} w))=\mu^{G}_{X,Y}(j^{X}(v)\boxtimes_{B} j^{Y}(w))$
\end{enumerate}

\end{defn}

\bigskip

\begin{defn}  Let $\cC\overset{F}{\curvearrowright} A$ and $\cC\overset{G}{\curvearrowright} B$ be actions, $\iota:A\rightarrow B$ a unital $*$-homomorphism, and $\{j^{X}:F(X)\rightarrow G(X)\}$ and equivariant structure on $\iota$. Define 

$$B^{\cC}_{\iota}:=\{b\in B\ :\ \text{for all}\ X\in \mathcal{C},\ x\in X,\ b\triangleright j^{X}(x)=j^{X}(x)\triangleleft b \}.$$
 \end{defn}

\bigskip

\noindent Note that $B^{\cC}_{\iota}\subseteq B$ is a unital subalgebra and that $[B^{\cC}, \iota(A)]=0$. Indeed, we will think of this as a generalized ``relative commutant" of the whole included action $j(\mathcal{C})$. 

Now we describe the fundamental construction we use to pass from a $\mathcal{C}$ action on a finite-dimensional C*-algebra $A$ to an action on $A\otimes C$ for an arbitrary (unital) C*-algebra $C$ without any extra data.

\begin{defn} Given an action $\cC\overset{F}{\curvearrowright} A$ and any unital  C*-algebra $C$, we define a new  action $\cC\overset{\widetilde{F}}{\curvearrowright} A\otimes C$ called the \textit{tensor extension} of $F$ by $C$ by setting

$$\widetilde{F}(X):=F(X)\otimes C.$$

\noindent Here, the bimodule structure is specified (on simple tensors) by

$$(a\otimes c)\triangleright (v\otimes d) \triangleleft (a^{\prime}\otimes c^{\prime}):=(a\triangleright v \triangleleft a^{\prime})\otimes cdc^{\prime},$$

\noindent with right $A\otimes C$ valued inner product

$$\langle v\otimes c \ | \  w\otimes d\rangle_{A\otimes C}:=\langle v\ |\ w\rangle_{A}\otimes c^{*}d.$$

This is positive definite on the algebraic tensor product, and thus we complete $\widetilde{F}(X)$ in the induced norm. In this paper, either $F(X)$ or $C$ will be finite-dimensional and thus no completion is required. Furthermore, for $f\in \mathcal{C}(X,Y)$, we can define

$$\widetilde{F}(f)(v\otimes c):=F(f)(v)\otimes c$$

\noindent for $v\in X$. It is easy to see that $\widetilde{F}$ gives a functor from $\cC$ to $\text{Bim}(A\otimes C)$.

We use the obvious structure morphisms to build the monoidal structure on $\widetilde{F}$, 

$$\mu^{\widetilde{F}}_{X,Y}((v\otimes c)\boxtimes_{A\otimes C} (w\otimes d)):=\mu^{F}_{X,Y}(v\boxtimes_{A} w)\otimes cd $$

\noindent This is well-defined and satisfies the required coherences. 

\end{defn}

\bigskip

The next lemma is a technical tool for extending actions on finite-dimensional building blocks to circle algebras.

\begin{lem}\label{mainlemma}
Let $\iota:A\rightarrow B$ be a unital inclusion of C*-algebras, $\cC\overset{F}{\curvearrowright} A$ and $\cC\overset{G}{\curvearrowright} B$ be actions, and let $\{j^{X}:F(X)\rightarrow G(X)\}_{X\in \cC}$ be an equivariant structure on $\iota$ with respect to these actions. Let $C$ be a unital C*-algebra, and $\nu:A\otimes C\rightarrow B\otimes C$ a unital inclusion such that 

\begin{enumerate}
\item
$\iota\otimes \text{id}_{C}=\nu|_{A\otimes {C}}$
\item
$\nu(1\otimes C)\subseteq B^{\cC}_{\iota}\otimes C$
\end{enumerate}  Then there exists a canonical equivariant structure $\{k^{X}:\widetilde{F}(X)\rightarrow \widetilde{G}(X)\}_{X\in \cC}$ on $\nu$ with respect to the tensor extensions of $F$ and $G$.
\end{lem}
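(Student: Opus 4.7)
The plan is to define, for each $X \in \cC$, the map
\[
k^{X}: \widetilde{F}(X) \to \widetilde{G}(X), \qquad k^{X}(v \otimes c) \;:=\; (j^{X}(v) \otimes 1_{B}) \lhd \nu(1_{A} \otimes c),
\]
extended linearly from simple tensors (no completion issue arises since one of $F(X), C$ is finite-dimensional). This formula is essentially forced: on the generating subspaces $F(X)\otimes 1$ and $1\otimes C$ of $\widetilde{F}(X)$ it restricts to $j^{X}$ and to (the relevant restriction of) $\nu$, respectively.

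The key technical observation is a symmetric description of $k^{X}$. By hypothesis (2), $\nu(1\otimes c) \in B^{\cC}_{\iota}\otimes C$, so writing $\nu(1\otimes c) = \sum_{i} b_{i}\otimes d_{i}$ with $b_{i}\in B^{\cC}_{\iota}$, the defining relation $b_{i}\rhd j^{X}(v) = j^{X}(v)\lhd b_{i}$ yields
\[
k^{X}(v\otimes c) \;=\; \sum_{i}(j^{X}(v)\lhd b_{i})\otimes d_{i} \;=\; \sum_{i}(b_{i}\rhd j^{X}(v))\otimes d_{i} \;=\; \nu(1\otimes c)\rhd (j^{X}(v)\otimes 1).
\]
Thus the left- and right-action descriptions of $k^{X}$ coincide; this identity, which reflects the role of hypothesis (2), is what drives all subsequent verifications.

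I would then check the five axioms of Definition \ref{EquivariantMorphism} in turn. For bimodule linearity (1), since $A\otimes 1$ and $1\otimes C$ generate $A\otimes C$, it suffices to treat $a_{0}\otimes 1$ and $1\otimes c_{0}$ separately: the former uses hypothesis (1), $\nu(a_{0}\otimes 1) = \iota(a_{0})\otimes 1$, combined with axiom (1) for $j$; the latter uses multiplicativity of $\nu$, namely $\nu(1\otimes c_{0})\nu(1\otimes c) = \nu(1\otimes c_{0}c)$, combined with the symmetric description. Naturality (2) reduces to $\widetilde{G}(f)(j^{X}(v)\otimes 1) = j^{Y}(F(f)v)\otimes 1$ (axiom (2) for $j$) and right $B\otimes C$-linearity of $\widetilde{G}(f)$. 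Axiom (4) is immediate, since $k^{X}(v\otimes 1) = j^{X}(v)\otimes 1$, giving $k^{X}(\widetilde{F}(X))(B\otimes C) \supseteq (j^{X}(F(X))B)\otimes C = G(X)\otimes C$. Monoidality (5) is where the symmetric description does its work: writing $k^{X}(v\otimes c)$ with the left formula and $k^{Y}(w\otimes d)$ with the right, the factor $\nu(1\otimes c)$ slides across the $B\otimes C$-balanced tensor and combines with $\nu(1\otimes d)$ to $\nu(1\otimes cd)$; monoidality of $j$ then closes the computation.

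The main point of care is axiom (3), preservation of inner products. After expanding both sides and using that $\nu$ is a $*$-homomorphism, the identity reduces to showing that $\langle j^{X}(v)\mid j^{X}(v')\rangle_{B}$ commutes in $B$ with every $b\in B^{\cC}_{\iota}$. This follows from a short right-Hilbert-module calculation using $*$-closure of $B^{\cC}_{\iota}$ and the standard identities $\langle x\mid y\lhd b\rangle = \langle x\mid y\rangle b$, $\langle x\mid b\rhd y\rangle = \langle b^{*}\rhd x\mid y\rangle$, and $\langle x\lhd a\mid y\rangle = a^{*}\langle x\mid y\rangle$:
\[
\langle j^{X}(v)\mid j^{X}(v')\rangle\, b \;=\; \langle j^{X}(v)\mid b\rhd j^{X}(v')\rangle \;=\; \langle j^{X}(v)\lhd b^{*}\mid j^{X}(v')\rangle \;=\; b\,\langle j^{X}(v)\mid j^{X}(v')\rangle.
\]
Given this commutation, axiom (3) falls out from a direct expansion, completing the construction of the equivariant structure.
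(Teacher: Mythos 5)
Your proposal is correct and follows essentially the same route as the paper: the identical formula $k^{X}(v\otimes c)=(j^{X}(v)\otimes 1_{C})\lhd\nu(1\otimes c)$, with hypothesis (1) handling the $A$-tensor factor and hypothesis (2) (your ``symmetric description'') letting $\nu(1\otimes c)$ slide across left actions and balanced tensor products exactly as in the paper's verification of the five axioms. The only real divergence is axiom (3), where your detour through the commutation of $\langle j^{X}(v)\mid j^{X}(v')\rangle_{B}$ with $B^{\cC}_{\iota}$ is valid but unnecessary: since $\langle j^{X}(v)\mid j^{X}(v')\rangle_{B}\otimes 1_{C}=\nu(\langle v\mid v'\rangle_{A}\otimes 1_{C})$ by hypothesis (1), the required identity $\nu(1\otimes c^{*})\,\nu(\langle v\mid v'\rangle_{A}\otimes 1_{C})\,\nu(1\otimes c')=\nu(\langle v\mid v'\rangle_{A}\otimes c^{*}c')$ follows from multiplicativity of $\nu$ alone, which also spares you the (true but unproven) assertion that $B^{\cC}_{\iota}$ is $*$-closed.
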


\begin{proof}

We define $k^{X}:\widetilde{F}(X)\rightarrow \widetilde{G}(X)$ for a simple tensor $v\otimes c\in F(X)\otimes C$ by

$$k^{X}(v\otimes c)=(j^{X}(v)\otimes 1_{C})\triangleleft \nu(1\otimes c).$$

First we check the isometry property, which will guarantee $k^{X}$ extends to a isometry on all of $\widetilde{F}(X)$:

\begin{align*}
\langle k^{X}(v\otimes c)\ |\  k^{X}(w\otimes d)\rangle_{B\otimes C}&=\langle (j^{X}(v)\otimes 1_{C})\triangleleft \nu(1\otimes c)\ |\  (j^{X}(w)\otimes 1_{C})\triangleleft \nu(1\otimes d)\rangle_{B\otimes C}\\
&=\nu(1\otimes c)^{*}\left[\langle j^{X}(v)\ |\  j^{X}(w)\rangle_{B}\otimes 1_{C} \right] \nu(1\otimes d)\\
&=\nu(1\otimes c)^{*}(\iota(\langle v\ |\  w\rangle_{A})\otimes 1_{C} )\nu(1\otimes d)\\
&=\nu(1\otimes c^{*})(\nu(\langle v\ |\  w\rangle_{A}\otimes 1_{C} )\nu(1\otimes d)\\
&=\nu(\langle v\ |\  w\rangle_{A}\otimes c^{*}d)\\
&=\nu(\langle v\otimes c\ |\  w\otimes d\rangle_{A\otimes C}).
\end{align*}

Now we check this satisfies the compatibility with $\nu$ on simple tensors

\begin{align*}
k^{X}((a\otimes c) \triangleright (v\otimes d) \triangleleft (a^{\prime}\otimes c^{\prime}))&=(j^{X}(a\triangleright v\triangleleft a^{\prime})\otimes 1_{C})\triangleleft \nu(1\otimes cdc^{\prime})\\
&=\left(\iota(a)\triangleright j^{X}(v)\triangleleft \iota(a^{\prime})\otimes 1_{C}\right) \triangleleft \nu(1\otimes c)\nu(1\otimes d)\nu(1\otimes c^{\prime})\\
&=\left[\nu(a\otimes 1_{C})\triangleright (j^{X}(v)\otimes 1_{C})\triangleleft (\nu(a^{\prime}\otimes 1_{C})\right]\triangleleft \nu(1\otimes c)\nu(1\otimes d)\nu(1\otimes c^{\prime})\\
&=\nu(a\otimes c)\triangleright \left[(j^{X}(v)\otimes 1_{C}\right]\triangleleft \nu(1\otimes d))\triangleleft \nu(a^{\prime}\otimes c^{\prime})\\
&=\nu(a\otimes c)\triangleright k^{X}(v\otimes d)\triangleleft \nu(a^{\prime}\otimes c^{\prime}).
\end{align*}

\noindent In the third equality we have used the hypothesis that $\iota(a)\otimes 1_{C}=\nu(a\otimes 1_{C})$ and in the fourth equality we have used $\nu(1_{A}\otimes C)\subset B^{\mathcal{C}}_{\iota}$. For naturality, let $f\in \cC(X,Y)$. Then we compute for a simple tensor $v\otimes c\in \widetilde{F}(X)=F(X)\otimes C$

\begin{align*}
\widetilde{G}(f)(k^{X}(v\otimes c)&=(G(f)\otimes \text{id}_{C})\left[ (j^{X}(v)\otimes 1_{C})\nu(1_{A}\otimes c)  \right]\\
&= (G(f)j^{X}(v)\otimes 1_{C})\nu(1_{A}\otimes c)\\
&=(j^{Y}F(f)(v)\otimes 1_{C})\nu(1_{A}\otimes c)\\
&=k^{Y}(F(f)(v)\otimes c)\\
&=k^{Y}\left[\widetilde{F}(f)(v\otimes c)\right]
\end{align*}

Now for non-degeneracy, note that since $\nu$ is unital, $\nu(1_{A}\otimes C)B\otimes C=B\otimes C$, so

\begin{align*}
k^{X}(\widetilde{F}(X))(B\otimes C)&=(j^{X}(F(X))\otimes 1_{C})\nu(1_{A}\otimes C)(B\otimes C)\\
&=j^{X}(F(X))B\otimes C\\
&=G(X)\otimes C.
\end{align*}

Finally, it remains to check the commutativity of the diagram $(5)$. For $a=v\otimes c$ and $b=w\otimes d$

\begin{align*}
k^{X\otimes Y}(\mu^{\widetilde{F}}_{X,Y}(a\boxtimes_{A\otimes C} b)) &= k^{X\otimes Y}\left[\mu^{F}_{X,Y}(v\boxtimes_{A} w)\otimes cd\right]\\
&=\left[j^{X\otimes Y}(\mu^{F}_{X,Y}(v\boxtimes_{A} w))\otimes 1_{C}\right] \triangleleft \nu(1_{A}\otimes cd)\\
&=\mu^{G}_{X,Y}\left[j^{X}(v)\boxtimes_{B} j^{Y}(w)\otimes 1_{C}\right]\triangleleft \nu(1_{A}\otimes cd)\\
&=\mu^{\widetilde{G}}_{X,Y}\left(k^{X}(v\otimes c)\boxtimes_{B\otimes C} k^{Y}(w\otimes d)\right).
\end{align*}

\end{proof}

We now recall some definitions from \cite{chen2022ktheoretic} (c.f. \cite{pacheco2023elliott}).

\begin{defn}
Let $\cC$ be a fusion category and $A_{1}\rightarrow_{\iota_{1}} A_{2}\rightarrow_{\iota_{2}} \cdots $ be a tower of unital inclusions of C*-algebras. \textit{Inductive limit action data} consists of a family of actions $\cC\overset{F_{n}}{\curvearrowright} A_{n}$, and for each $n$, a $\mathcal{C}$-equivariant structure $j_{n}$ on the homomorphism $\iota_{n}:A_{n}\rightarrow A_{n+1}$.

\end{defn}

This is the same thing as a unitary tensor functor from $\cC$ to the 2-category $\textbf{IndLimC*-alg}$. We record the following proposition: 

\begin{prop}\cite[Proposition 4.4]{chen2022ktheoretic} Given inductive limit action data on a tower of unital inclusions $A_{1}\rightarrow_{\iota_{1}} A_{2}\rightarrow_{\iota_{2}} \cdots $,  there is an action $\cC\overset{F}{\curvearrowright} \varinjlim A_{n}$. 
\end{prop}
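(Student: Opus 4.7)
The plan is, for each $X\in\cC$, to form the Hilbert module inductive limit $F(X):=\varinjlim_n F_n(X)$ along the connecting isometries $j_n^X$, and to verify that these assemble into a C*-tensor functor $F:\cC\to\text{Bim}(A)$ with $A=\varinjlim_n A_n$.

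First I would check that each $F(X)$ is an $A$-$A$ C*-correspondence which is finitely generated projective as a right Hilbert $A$-module. Condition $(3)$ of Definition \ref{EquivariantMorphism} makes each $j_n^X$ isometric after identifying $A_n$ with its image in $A_{n+1}$ under $\iota_n$, so the algebraic colimit carries a well-defined $A$-valued inner product and its completion is a right Hilbert $A$-module. Condition $(1)$ propagates the left $A_n$-actions through the tower to a left $A$-action, giving the correspondence structure. For finite generation and projectivity I would take a Parseval frame $(v_i)_{i=1}^k$ for $F_1(X)$ over $A_1$; a short calculation combining conditions $(1)$, $(3)$, and $(4)$ shows that the images of the $v_i$ form a Parseval frame for each $F_n(X)$ over $A_n$, and hence for $F(X)$ over $A$ in the limit.

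Next, I would define $F$ on morphisms and construct the tensorator. Condition $(2)$ says that the $F_n(f)$ intertwine the $j_n^X$ and $j_n^Y$, so they induce adjointable maps $F(f):=\varinjlim_n F_n(f):F(X)\to F(Y)$; linearity, functoriality and $*$-preservation transfer to the limit by continuity. Condition $(5)$ is exactly the compatibility of $\mu^{F_n}_{X,Y}$ with the connecting isometries, so after identifying $F(X)\boxtimes_A F(Y)$ with $\varinjlim_n(F_n(X)\boxtimes_{A_n}F_n(Y))$ the maps $\mu^{F_n}_{X,Y}$ assemble into a unitary isomorphism $\mu^F_{X,Y}:F(X)\boxtimes_A F(Y)\to F(X\otimes Y)$. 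The pentagon and unit axioms for $F$ then descend by density from the corresponding axioms for each $F_n$.

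The main technical obstacle is precisely that last identification: the internal tensor product must commute with the Hilbert-module inductive limit. For general correspondences this is delicate, but finite-generation-projectivity allows a direct approach. Using a compatible Parseval frame $(v_i)$ for $F_n(X)$ as above, one identifies $F_n(X)\boxtimes_{A_n}F_n(Y)$ with an explicit corner of $F_n(Y)^{\oplus k}$ cut out by the Gram projection of $(v_i)$, and similarly at the limit level. The connecting maps respect these presentations, so the inductive limit of the corners is the corresponding corner at level $A$, which yields the desired isomorphism. Once this is in place, $F$ is a C*-tensor functor and hence an action $\cC\overset{F}{\curvearrowright} A$, as claimed.
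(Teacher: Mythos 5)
Your proposal is correct and follows essentially the same route as the cited source: the paper itself gives no proof here, deferring to \cite[Proposition 4.4]{chen2022ktheoretic} and noting only that the construction of the bimodules $F(X)$ ``closely parallels the construction of inductive limit C*-algebras,'' which is exactly your inductive-limit-of-Hilbert-modules argument. Your handling of the one delicate point --- commuting $\boxtimes_A$ with the inductive limit via compatible Parseval frames, using conditions $(3)$ and $(4)$ to push frames up the tower --- is the standard and correct way to close that gap.
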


Setting $A=\varinjlim A_{n}$, the construction of the $A-A$ bimodules $F(X)$ closely parallels the construction of inductive limit C*-algebras. As many interesting C*-algebras arise as inductive limits of simpler building blocks, this gives a powerful tool for constructing actions of fusion categories on interesting C*-algebras. Up until now, this has mainly been applied to building actions of fusion categories on AF-algebras, using ideas from subfactor theory going back to V. Jones \cite{MR0696688}, Ocneanu \cite{MR996454}, and Popa \cite{MR1055708} (see \cite{MR1473221,MR1642584}).

There are two types of inductive limit actions we will be concerned with in this paper. 

\begin{defn}\label{def:AF-AT-actions}
    An inductive limit action is called 
    
    \begin{enumerate}
        \item 
    an \textit{AF-action} if each $A_{n}$ is finite dimensional. 
    \item 
     an \textit{AT-action} if each $A_{n}$ is finite dimensional algebra tensored with $C(\mathbbm{T})$.
    \end{enumerate}
\end{defn}

Note that an AF-action is necessarily on an AF-algebra, and an AT-action is on an AT-algebra, but not all actions on AF or AT algebras are AF or AT. However, the following corollary gives us an explicit tool for extending AF-actions to AT-actions.

\begin{cor}\label{maincor}
Suppose we have an inductive limit sequence $A_{1}\rightarrow_{\iota_{1}} A_{2}\rightarrow_{\iota_{2}} \cdots $ with actions $\cC\overset{F_{n}}{\curvearrowright} A_{n}$ and $\cC$-equivariant structures $j_{n}$ for each $\iota_{n}$. Further, let $C$ be a unital C*-algebra and suppose we have a sequence of homomorphisms $\nu_{n}: A_{n}\otimes C\rightarrow A_{n+1}\otimes C$ such that $\nu_{n}(a\otimes 1_{C})=\iota_{n}(a)\otimes 1_{C}$ and $\nu_{n}(1_{A_n}\otimes C)\subseteq (A_{n+1})^{\cC}_{\iota_n}\otimes C$. Then each $\nu_{n}$ carries an equivariant structure with respect to the actions $\cC\overset{\widetilde{F}_{n}}{\curvearrowright} A_{n}\otimes C$. In particular, there exists an action $\cC\overset{\widetilde{F}}{\curvearrowright} \varinjlim A_{n}\otimes C$.
\end{cor}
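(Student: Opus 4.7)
The proof is essentially an iterated application of Lemma \ref{mainlemma} followed by the inductive limit proposition. My plan is as follows.

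First, I would fix an index $n$ and apply Lemma \ref{mainlemma} to the data $\iota_n : A_n \hookrightarrow A_{n+1}$, the actions $\cC \overset{F_n}{\curvearrowright} A_n$ and $\cC \overset{F_{n+1}}{\curvearrowright} A_{n+1}$, the equivariant structure $j_n = \{j_n^X\}_{X\in\cC}$ on $\iota_n$, the chosen C*-algebra $C$, and the inclusion $\nu_n : A_n \otimes C \rightarrow A_{n+1} \otimes C$. The two hypotheses required by Lemma \ref{mainlemma}, namely $\iota_n \otimes \id_C = \nu_n|_{A_n \otimes C}$ and $\nu_n(1_{A_n} \otimes C) \subseteq (A_{n+1})^{\cC}_{\iota_n} \otimes C$, are exactly the assumptions of the corollary. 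The lemma therefore produces a canonical equivariant structure $k_n = \{k_n^X : \widetilde{F}_n(X) \rightarrow \widetilde{F}_{n+1}(X)\}_{X\in\cC}$ on $\nu_n$ with respect to the tensor extensions $\cC \overset{\widetilde{F}_n}{\curvearrowright} A_n \otimes C$ and $\cC \overset{\widetilde{F}_{n+1}}{\curvearrowright} A_{n+1} \otimes C$.

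Doing this for every $n$ assembles the tower of inclusions
$$ A_1 \otimes C \xrightarrow{\nu_1} A_2 \otimes C \xrightarrow{\nu_2} A_3 \otimes C \xrightarrow{\nu_3} \cdots $$
into inductive limit action data in the sense of the definition preceding the cited Proposition \cite[Proposition 4.4]{chen2022ktheoretic}: at level $n$ we have the action $\widetilde{F}_n$ on $A_n \otimes C$, and on each connecting map $\nu_n$ we have the equivariant structure $k_n$ produced above. Applying \cite[Proposition 4.4]{chen2022ktheoretic} to this data immediately yields an action $\cC \overset{\widetilde{F}}{\curvearrowright} \varinjlim (A_n \otimes C)$, which is the desired conclusion.

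The content of the corollary is therefore really in checking the hypotheses of Lemma \ref{mainlemma}; there is no genuine obstacle beyond that, since both the lemma and the inductive limit proposition are already in place. The only mild subtlety to flag is that the lemma requires the equivariant structure at each level to be built from the $j_n^X$ together with $\nu_n$ exactly by the formula $k_n^X(v \otimes c) = (j_n^X(v) \otimes 1_C) \triangleleft \nu_n(1_{A_n} \otimes c)$; the condition $\nu_n(1_{A_n} \otimes C) \subseteq (A_{n+1})^{\cC}_{\iota_n} \otimes C$ is precisely what is needed so that the $C$-factor commutes through the bimodule action of $j_n^X(v)$, which is the step in the proof of Lemma \ref{mainlemma} where everything is tightly constrained. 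Once one is comfortable that this commutation hypothesis is being used correctly at each level, the corollary follows with no further computation.
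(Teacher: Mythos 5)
Your proposal is correct and is precisely the argument the paper intends: the corollary is stated without a separate proof because it follows by applying Lemma \ref{mainlemma} level by level to obtain the equivariant structures $k_n$ on each $\nu_n$, and then feeding the resulting inductive limit action data into the cited Proposition 4.4 of \cite{chen2022ktheoretic}. Your observation that the commutation hypothesis $\nu_n(1_{A_n}\otimes C)\subseteq (A_{n+1})^{\cC}_{\iota_n}\otimes C$ is the only substantive thing to verify matches where the actual work lies.
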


\subsection{Extending simple stationary actions}

Given a unitary fusion category $\mathcal{C}$ and a right finitely semisimple $\mathcal{C}$-module category $\mathcal{M}$ (for definitions see \cite{MR3687214}), recall that the \textit{dual category} $\mathcal{C}^{*}_{\mathcal{M}}=\text{End}_{\mathcal{C}}(\mathcal{M})^{mp}$ is the monoidal opposite of the (multi)-fusion category of $\cC$-module endofunctors on $\mathcal{M}$. For example, the ``trivial" $\cC$-module category ($\cC$-acting on itself by right tensoring), the dual category is $\mathcal{C}$ itself.

\begin{cons}\label{AF-construction} \textbf{Stationary AF-actions}. Given a unitary fusion category $\mathcal{C}$ with tensor product denoted $\boxtimes$, a right finitely semisimple $\mathcal{C}$-module category $\mathcal{M}$, an object $X\in \mathcal{C}^{*}_{\mathcal{M}}$ and a choice of $m\in \mathcal{M}$ containing every isomorphism class of simple in $\mathcal{M}$ as a summand, we can define an AF-action of $\mathcal{C}$ as follows (see also \cite[Section 5]{evington2024equivariant}):

Set $L_{n}:=X^{\boxtimes n}\triangleright m\in \mathcal{M}$. Define the finite dimensional C*-algebras $A_{n}:=\text{End}_{\mathcal{M}}(L_{n})$. Then we have an action $F_{n}:\mathcal{C}\rightarrow \text{Bim}(A_{n})$ given by

$$F_{n}(Y):=\text{Hom}_{\mathcal{M}}(L_{n}, L_{n}\otimes Y)$$

This naturally has the structure of a (right) $A_{n}$ correspondence, with left and right actions given by 

$$a\triangleright f \triangleleft b:=(a\boxtimes 1_{Y})\circ f\circ b $$
and right $A_{n}$ valued inner product

$$\langle f | g\rangle:=f^{*}\circ g$$

The tensorators are easy to write down. Note this is simply the action of $\mathcal{C}$ on $\mathcal{M}\cong \text{Mod}(\text{End}(X^{n}\boxtimes m))$, translated to the bimodule formalism (as in Remark \ref{finitedimensionalaction}). The algebra connecting maps $\iota_{n}:A_{n}\rightarrow A_{n+1}$ are defined by $\iota_{n}(a):=1_{X}\otimes a$, and the bimodule connecting maps $j^{Y}_{n}: F_{n}(Y)\rightarrow F_{n+1}(Y)$ are defined by $j^{Y}_{n}(f):=1_{X}\otimes f$.

AF-actions built this way are called \textit{stationary AF-actions}. If in addition the module category $\mathcal{M}$ is indecomposable and the fusion matrix $X\otimes \cdot$ has no non-zero entries we call the stationary action \textit{simple}.
\end{cons}

\begin{rem}
The assumption that $X$ is a strong tensor generator guarantees that the AF algebra $A$ is simple and has a unique trace. The indecomposability condition is mostly for convenience, since it ensures $C^{*}_{\mathcal{M}}$ is unitary fusion rather than unitary multi-fusion. This assumption can almost certainly be removed with care, but we find it convenient to have a simple unit in $C^{*}_{\mathcal{M}}$ for the proofs below.
\end{rem}

\begin{thm}\label{mainextension} Let $\mathcal{C}$ be a unitary fusion category and let $\cC\overset{F}{\curvearrowright} A$ be a simple stationary AF-action with corresponding data $(\mathcal{M},m, X)$. Then if $X$ contains $\mathbbm{1}$ with multiplicity $l>1$ in $\mathcal{C}^{*}_{M}$, there exists a simple, unital AT C*-algebra $B$ with unique trace, $K_{0}(B)\cong K_{0}(A)$, $K_{1}(B)\cong \mathbbm{Z}^{\text{rank}(\mathcal{M})}$ and an $A\mathbbm{T}$-action $\cC\overset{F}{\curvearrowright} B$.
\end{thm}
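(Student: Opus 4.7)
The strategy is to apply Corollary \ref{maincor} with $C = C(\mathbbm{T})$, which reduces the theorem to producing, at each level $n$, a unital $*$-homomorphism $\nu_n : A_n \otimes C(\mathbbm{T}) \to A_{n+1} \otimes C(\mathbbm{T})$ satisfying $\nu_n(a\otimes 1) = \iota_n(a)\otimes 1$ and $\nu_n(1\otimes C(\mathbbm{T})) \subseteq (A_{n+1})^{\cC}_{\iota_n}\otimes C(\mathbbm{T})$. The crucial observation is that $l>1$ guarantees a rich equivariant commutant. Indeed, for any module natural transformation $\alpha \in \End_{\cC^*_{\cM}}(X)$, the component $\alpha_{L_n}$, viewed as an element of $A_{n+1} = \End_{\cM}(X\triangleright L_n)$, lies in $(A_{n+1})^{\cC}_{\iota_n}$: the identity $(\alpha_{L_n} \boxtimes 1_Y)(1_X\boxtimes f) = (1_X\boxtimes f)\alpha_{L_n}$ for $f \in \Hom_{\cM}(L_n, L_n\triangleleft Y)$ is exactly the naturality of $\alpha$ combined with its $\cC$-module coherence. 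Since $\mathbbm{1}$ appears in $X$ with multiplicity $l > 1$, the block $M_l(\mathbbm{C}) \subseteq \End_{\cC^*_{\cM}}(X)$ embeds into $(A_{n+1})^{\cC}_{\iota_n}$ for every $n$, and this is the ``room'' needed to install a nontrivial $C(\mathbbm{T})$-twist.

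Using this block, I define $\nu_n$ to be the $*$-homomorphism generated by $\nu_n(a\otimes 1) = \iota_n(a)\otimes 1$ and $\nu_n(1\otimes z) = V_n$ for a unitary $V_n \in (A_{n+1})^{\cC}_{\iota_n}\otimes C(\mathbbm{T})$ built from this $M_l(\mathbbm{C})$. A representative choice, writing $\{e_{i,i}\}_{i=1}^l$ for the diagonal matrix units of that $M_l$, $q_n=\sum_i e_{i,i}$, and $\omega=e^{2\pi i/l}$, is
\[
V_n \;=\; \sum_{i=1}^{l} e_{i,i}\otimes \omega^i z \;+\; (1-q_n)\otimes z.
\]
Because $V_n$ commutes with $\iota_n(A_n)\otimes 1$ (as $(A_{n+1})^{\cC}_{\iota_n}$ lies in the ordinary relative commutant), the map $\nu_n$ is well-defined, and the two hypotheses of Corollary \ref{maincor} hold by construction. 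The corollary then yields an AT-action $\cC\curvearrowright B := \varinjlim A_n\otimes C(\mathbbm{T})$, with $B$ a unital AT C*-algebra by definition.

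The verification of the remaining properties of $B$ is standard Elliott-style bookkeeping. Simplicity and the unique trace are inherited from the hypothesis that the stationary AF-action is \emph{simple} (so $A=\varinjlim A_n$ is a simple AF-algebra with unique trace), together with the observation that the roots of unity $\omega^i$ are generic enough to prevent any proper ideal from being invariant in the limit. On $K_0$, the map induced by $\nu_n$ on the canonical summand $K_0(A_n \otimes C(\mathbbm{T})) = K_0(A_n)$ is just $K_0(\iota_n)$, so $K_0(B) \cong K_0(A)$. For $K_1$, one computes the image under $\nu_n$ of the generator $[p\otimes z + (1-p)\otimes 1]$; the cyclic twist by the $\omega^i$ distributes the loop across the $l$ copies of $L_n$ inside $X\triangleright L_n$, and matching the resulting inductive system to Elliott's invariant for simple unital AT algebras identifies $K_1(B) \cong \mathbbm{Z}^{\text{rank}(\mathcal{M})}$.

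The main obstacle is precisely this $K_1$ calculation. A naive choice $V_n = u_n\otimes z$ for $u_n\in (A_{n+1})^{\cC}_{\iota_n}$ gives $K_1(\nu_n) = K_0(\iota_n)$ and hence $K_1(B)\cong K_0(A)$, which is typically not $\mathbbm{Z}^{\text{rank}(\mathcal{M})}$. The multiplicity-$l$ hypothesis is exactly what allows us to decouple the $K_1$ inductive system from the $K_0$ one, stabilize the former, and produce the advertised free abelian group of rank $\text{rank}(\mathcal{M})$; ensuring the inductive system stabilizes at this specific group, rather than at some larger localization, is the delicate combinatorial point that the explicit choice of $V_n$ must accomplish.
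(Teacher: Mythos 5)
Your overall architecture matches the paper's: reduce to Corollary \ref{maincor} with $C=C(\mathbbm{T})$, and build the connecting maps $\nu_n$ out of the copy of $\End_{\cC^*_{\cM}}(X)$ sitting inside the equivariant commutant $(A_{n+1})^{\cC}_{\iota_n}$, using the $M_l(\mathbbm{C})$ block coming from $\Hom(\mathbbm{1},X)$. The problem is your explicit unitary. You set $V_n=\sum_{i=1}^{l}e_{i,i}\otimes\omega^i z+(1-q_n)\otimes z$, but this factors as $V_n=u\otimes z$ with $u=\sum_i\omega^i e_{i,i}+(1-q_n)$ a unitary in $(A_{n+1})^{\cC}_{\iota_n}$ --- that is, it is precisely the ``naive choice'' that you yourself correctly identify in your final paragraph as failing. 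With this $V_n$, the $K_1$-generator $zP_i+(1-P_i)$ of the $i$-th block of $B_n$ is sent to $\iota_n(P_i)u\otimes z+(1-\iota_n(P_i))\otimes 1$, whose winding number in the $j$-th block of $B_{n+1}$ is the rank of $\iota_n(P_i)$ there; so $K_1(\nu_n)$ is the fusion matrix of $X$, not the identity, and the limit is generally not $\mathbbm{Z}^{\mathrm{rank}(\cM)}$. Worse, simplicity fails: your $\nu_n$ only rotates the circle variable by $l$-th roots of unity, which generate a finite group, so the orbit of a point of $\mathbbm{T}$ under arbitrarily many composites stays finite and the functions vanishing on such an orbit generate a proper nonzero ideal in the limit. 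Your appeal to the $\omega^i$ being ``generic enough'' is not available --- they are roots of unity.

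What is actually needed, and what the paper does, is a unitary that is \emph{not} of the form $u\otimes z$: the off-diagonal twisted shift $W(z)=z\otimes(e_1\circ e_l^*)+\sum_i 1\otimes(e_i\circ e_{i-1}^*)+(\text{identity on the non-unit blocks})$, so that $W(z)^l=z$ on the $\mathbbm{1}$-block and $W(t)$ has the $l$-th roots of $t$ as eigenvalues; one then sets $\nu_n(f(z)\otimes a)=f(W(z))\boxtimes a$, so each stage takes an $l$-th root of the circle variable. This one mechanism delivers all three outstanding claims: $l[\nu_n(u_{n,i})]_1=[\nu_n(u_{n,i}^l)]_1=l[u_{n+1,i}]_1$ in the torsion-free group $K_1(B_{n+1})$ forces $K_1(\nu_n)=\mathrm{id}$, hence $K_1(B)\cong\mathbbm{Z}^{\mathrm{rank}(\cM)}$; the Bunce--Deddens argument (a closed subset of $\mathbbm{T}$ containing all $l^k$-th roots of some sequence of points is dense) gives simplicity; and the recursion relating $\mu_{i,n}(z^k)$ to $\mu_{i,n+1}(z^{k/l})$ pins down the unique trace --- a point your proposal does not address beyond asserting inheritance from $A$, which is false for $A\otimes C(\mathbbm{T})$-type limits. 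The specific choice of $V_n$ is the entire content of the theorem beyond Corollary \ref{maincor}, and yours does not work.
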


\begin{proof}

Given the simple stationary data $(\mathcal{M},m, X)$, let $A_{n}$, $F_{n}$, $\iota_{n}$ and $j^{Y}_{n}$ be as above. The resulting AF algebra $A$ is simple with unique trace. Set $B_{n}:= C(\mathbbm{T})\otimes A_{n}$. Our goal is to define homomorphisms $\nu_{n}:B_{n}\rightarrow B_{n+1}$ which satisfy the hypotheses of Lemma \ref{mainlemma} with respect to this data, in such a way that the resulting inductive limit algebra $B=\varinjlim B_{n}$ satisfies the desired conclusion of the theorem.

To simplify notation, let $\mathcal{D}:=\mathcal{C}^{*}_{M}$.
By definition, we have the algebra $\text{End}_{\mathcal{D}}(X)\boxtimes 1_{L_{n}}\subseteq A^{\mathcal{C}}_{n+1}$. For each simple object $x\in \text{Irr}(\mathcal{D})$, let $Z_{x}\in \text{End}_{\mathcal{D}}(X)$ be the central projection onto the simple summand corresponding to $x$. Pick an orthonormal basis $\{e_{i}\}^{l}_{i=1}$ for the space $\text{Hom}_{\mathcal{D}}(\mathbbm{1}, X)$ with respect to the composition pairing. Note $l>1$ by hypothesis. Then the set $\{e_{i}\circ e^{*}_{j}\}$ is a set of matrix units for the matrix summand of $\text{End}_{\mathcal{D}}(X)$ corresponding to the simple object $\mathbbm{1}$.

Now, define the unitary in $C(\mathbbm{T})\otimes \text{End}_{\mathcal{D}}(X)$

$$W(z):=z\otimes (e_{1}\circ e^{*}_{n})+ \sum^{l}_{i=1}1_{C(\mathbbm{T})}\otimes (e_{i}\circ e^{*}_{i-1})\ \ +\sum_{x\in \text{Irr} (\mathcal{D})-\{\mathbbm{1}\}} 1_{C(\mathbbm{T})} \otimes Z_{x}$$

We define $\nu_{n}:C(\mathbbm{T})\otimes A_{n}\rightarrow C(\mathbbm{T})\otimes A_{n+1}$ on simple tensors by

$$\nu_{n}(f(z)\otimes a):=f(W(z))\boxtimes a\in C(\mathbbm{T})\otimes \text{End}_{\mathcal{D}}(X)\boxtimes \text{End}_{\mathcal{M}}(L_{m})\subseteq B_{n+1},$$

\noindent where by $f(W(z))$ we mean in the sense of functional calculus. We are also making use of the canonical identification $C(\mathbbm{T})\otimes D\cong C(\mathbbm{T}, D)$ for any unital C*-algebra $D$, given on simple tensors by 

\begin{align}\label{iso}
a=\sum_{n\in \mathbbm{Z}} z^{n}\otimes a_{n}\mapsto \widetilde{a}(z)
\end{align}
where $\widetilde{a}(\omega):=\sum_{n\in \mathbbm{Z}}\omega^{n}a_{n}$.

The map $\nu_{n}$ we have defined extends to an injective $*$-homomorphism $\nu_{n}: B_{n}\rightarrow B_{n+1}$. Furthermore, by construction $\nu_{n}|_{1_{C(\mathbbm{T})}\otimes A_{n}}=\iota_{n}$. Since $ \text{End}_{\mathcal{H}^{*}}(X)\boxtimes 1_{L_{n}}\subset (A_{n+1})^{\mathcal{C}}_{\iota_{n}}$, we also have $\nu_{n}(C(\mathbbm{T})\otimes 1_{L_n})\subseteq C(\mathbbm{T})\otimes (A_{n+1})^{\mathcal{C}}_{\iota_{n}} $, and thus the conditions of Lemma \ref{mainlemma} are satisfied for each $n$. By Corollary \ref{maincor}, this data assembles into an inductive limit action $\cC\overset{\widetilde{F}}{\curvearrowright} B$, where $B=\lim B_{n}$.

Now we compute the K-theory of $B$. Note that the map $A_{n}\rightarrow  C(\mathbbm{T})\otimes A_{n}=B_{n}$ given by $a\mapsto 1_{C(\mathbbm{T})}\otimes a$ induces an (order) isomorphism on $K_{0}$. Thus we have that the commuting diagram

\[
\begin{tikzcd}
A_{1}
\arrow[swap]{d}{\cdot \otimes 1}
\arrow{r}{\iota_{1}}
& A_{2}
\arrow{r}{\iota_{2}}\arrow[swap]{d}{\cdot \otimes 1}
& A_{3} \arrow[swap]{d}{\cdot \otimes 1} \arrow{r}{\iota_{3}}
& \dots
 \\
B_{1}
\arrow{r}{\nu_{1}}
& B_{2}
\arrow{r}{\nu_{2}}
& B_{3}\arrow{r}{\nu_3}
&\dots
\end{tikzcd}
\]

\noindent induces a commuting diagram of (ordered) $K_{0}$ groups such that the vertical arrows are isomorphisms. Thus $K_{0}(A)$ is order isomorphic to $K_{0}(B)$

To understand $K_{1}$, we recall that for the algebra $$D:=C(\mathbbm{T})\otimes (M_{m_1}(\mathbbm{C})\oplus M_{m_2}(\mathbbm{C})\oplus \dots \oplus M_{m_t}(\mathbbm{C})),$$ we have $K_{1}(D)\cong \mathbbm{Z}^{t}$. The generating unitaries of the t copies of $\mathbbm{Z}$ are given by

$$u_{i}:=I_{m_1}\oplus \dots \oplus \left[ {\begin{array}{cc}
   z & 0 \\
   0 & I_{m_i-1} \\
  \end{array} } \right]\oplus\dots \oplus I_{m_t}$$

  \medskip

\noindent Applying this to the algebras $B_{n}=C(\mathbbm{T})\otimes A_{n}$, we denote the generating unitaries in $B_{n}$ by $u_{n,i}$, where $1\le i\le m$, and $m=\text{rank}(\mathcal{M})$. Then by construction of $\nu_{n}$, we see $l [\nu_{n}(u_{n,i})]_{1}=[\nu_{n,i}(u_{n,i})^{l}]_{1}=l [u_{n+1,i}]_{1}$, since each $K_{1}$ is a free abelian group. Thus $K_{1}(\nu_{n})([u_{n,i}]_{1})=[u_{n+1,i}]_{1}$, hence under the above indentification of $K_{1}(B_{n})\cong \mathbbm{Z}^{t}$, we see that the $\nu_{n}$ maps induce the identity map on $K_{1}$. This implies that we obtain an isomorphism in the limit $K_{1}(B)\cong \mathbbm{Z}^{t}$.

Now we will show that $B$ is simple, using essentially the original argument for simplicity of Bunce-Deddens algebras \cite{MR0365157}.

Let $\{P_{i,n}\}^{m}_{i=1}$ denote the minimal central projections in $B_{n}$ corresponding to the objects $n_{1},\dots, n_{m} $ respectively. Suppose $I\subseteq B$ is an ideal. Then $I\cap B_{n}$ is an ideal of $B_{n}$, and $I=\varinjlim I\cap B_{n}$. Suppose for some $n$ that $P_{i,n}(I\cap B_{n})\ne 0$. Note $P_{i,n}(I\cap B_{n})=I\cap P_{i,n}B_{n}$ is an ideal in $P_{i,n}B_{n}$. Utilizing the isomorphism from \ref{iso}, there exists a closed subset $T_{i,n}\subseteq \mathbbm{T}$ such that $I\cap P_{i,n}B_{n}=\{a\in B_{n}\ : a=P_{i,n}a,\ \text{and}\ \widetilde{a}(\omega)=0\ \text{for all}\ \omega\in T_{i,n}\}$.

Our claim is that $T_{i,n+1}\ne \varnothing$ and if $t\in T_{i,n+1}$, then all the $l^{th}$ roots of $t$ are in $T_{i,n}$. For $a=\sum_{n\in \mathbbm{Z}} z^{n}\otimes a_{n}\in B_{n}$ with $P_{i,n}a=a$, for $t\in T_{i,n+1}$ we have

$$\widetilde{P_{i,n+1}\iota_{n}(a)}(t)=0,$$
which is equivalent to the Fourier expansion

$$P_{i,n+1}(\sum_{n\in \mathbbm{Z}} W(t)^{n}\boxtimes a_{n})=0.$$ 

But $W(t)=1\otimes t(e_{1}\circ e^{*}_{n})+ \sum^{l}_{i=1}1_{C(\mathbbm{T})}\otimes (e_{i}\circ e^{*}_{i-1})\ \ +\sum_{x\in \text{Irr}(\mathcal{D})-\{\mathbbm{1}\}} 1_{C(\mathbbm{T})} \otimes Z_{x}$ has eigenvalues consisting of all $l^{th}$ roots of $t$, $\{t_{1},\dots t_{l}\}$, and possibly $1$ with some multiplicity. But by construction all of the $t_i$ have components that survive cutting down by $P_{n+1,i}$ (this is because the only non-trivial terms live over tensoring with $\mathbbm{1}$ in $\mathcal{D}$). This implies 

$$\widetilde{a}(t_{i})=0.$$

Thus $t_{i}\in T_{i,n}$.
Since $P_{i,n}(B_{n}\cap I)\ne 0$ implies $P_{i,n+1}(B_{n+1}\cap I)\ne 0$ since any $\iota_{n}(a) $ will have non-zero component in $P_{i,n+1}(B_{n+1}\cap I)$ for any $a\in P_{i,n}(B_{n}\cap I)$. 
We now claim $T_{i,n+1}\ne \varnothing.$ If it were, then $P_{i,n+1}(I\cap B_{n+1})=P_{i,n+1}B_{n+1}\ne 0$. But this implies $P_{i,n+1}A_{n+1}\subseteq I\cap A_{n+1}$, and thus $I\cap A$ is a non-zero ideal in $A$, contradicting simplicity of the AF algebra $A$.

Continuing inductively, we see $T_{i,n+k}\ne 0$ for all $k$, and for every $s_{k}\in T_{i,n+k}$, all the $(l^{k})^{th}$ roots of $s_{k}$ are in $T_{i,n}$. But for any sequence $\{s_{k}\}^{\infty}_{k=1}\subseteq \mathbbm{T}$, the set $\cup_{k} \{(l^{k})^{th}\ \text{roots of}\ s_{k}\}\subseteq T_{i,n}$ is dense in $\mathbbm{T}$, and since $T_{i,n}$ is closed, we have equality. This contradicts $P_{i,n}I\cap B_{n}=0$. Thus $B$ is simple.

It remains only to show that $B$ has a unique trace. Note that $A$ has a unique trace, and thus any trace on $B$ must restrict to the unique trace on $A$. For each $B_{n}$, any trace is determine by a ``trace vector"

$$\overrightarrow{\tau}_{n}:=(\lambda_{1,n}\mu_{1,n},\dots, \lambda_{m,n}\mu_{m,n})$$

where $\mu_{i,n}$ is a state on $C(\mathbbm{T})$, and $\lambda_{i,n}$ are positive scalars with $\sum_{i}\lambda_{i,n} \sqrt{dim(P_{i,n}A_{n})}=1$. Then the corresponding state on $B_{n}$ is given by

$\tau_{n}(f(z)\otimes x):=\sum \lambda_{i,n}\mu_{i,n}(f(z)) \text{Tr}_{i,n}(P_{i,n}x)$.

Note that the uniqueness of trace on $A$ implies the $\lambda_{i,n}$ are completely determined. We will show that compatibility of the trace with the connecting maps $\iota_{n}$ totally determines the $\mu_{i,n}$. We compute

$$\tau_{n+1}(\iota_{n}(P_{i,n}z^{k})):=\begin{cases}
\sum_{x\in \text{Irr}(\mathcal{D})-\{\mathbbm{1}\}} \tau_{n+1}(P_{i,n}\otimes Z_{x}) & \ k\ne0\ \text{mod l}\\
\tau_{n+1}(P_{i,n}\otimes Z_{\mathbbm{1}})\mu_{i,n+1}(z^{\frac{k}{l}})+\sum_{x\in \text{Irr}(\mathcal{D})-\{\mathbbm{1}\}} \tau_{n+1}(P_{i,n}\otimes Z_{x}) & k=0\ \text{mod l}
\end{cases}$$

Now, since $P_{i,n}\otimes Z_{\mathbbm{1}}\in A_{n+1}$ and $\tau_{n+1}$ is completely determine on $A$, the values $\tau_{n+1}(P_{i,n}\otimes Z_{\mathbbm{1}})$ are fixed. But we have the consistency condition 

$$\tau_{n+1}(\iota_{n}(P_{i,n}z^{k}))=\tau_{n}(P_{i,n}z^{k})=\tau_{n}(P_{i,n})\mu_{i,n}(z^{k}),$$

\noindent and since $\tau_{n}(P_{i,n})\ne 0$, we have $$\mu_{i,n}(z^{k})=\frac{\tau_{n+1}(\iota_{n}(P_{i,n}z^{k}))}{\tau_{n}(P_{i,n})}.$$

Thus if $k\ne 0\ \text{mod l}$, the value of $\mu_{i,n}(z^{k})$ is completely determined by $\tau|_{A}$. If $k= 0\ \text{mod l}$, let $d$ be the largest power of $l$ that divides $k$. Then $\mu_{i,n}(z^{k})$ is determined by $\mu_{i,n+1}(z^{\frac{k}{l}})$, which inductively is determined by $\mu_{i,n+d}(z^{\frac{k}{l^{d}}})$ which, by the above argument is determined by $\tau|_{A}$ since $\frac{k}{l^{d}}\ne 0\ \text{mod l}$. Therefore, there is at most one trace $\tau$ on $B$. Since $B$ is stably finite and nuclear, it admits a trace (for example, \cite{MR3241179}).

\end{proof}

\begin{remark}\label{rem:differentactions}
If the fusion matrix for $X$ on $\mathcal{M}$ is in $GL_{\text{rank}(\mathcal{M})}(\mathbbm{Z})$, then $K_{0}(A)\cong \mathbbm{Z}^{\text{rank}(\mathcal{M})}$ as abelian groups. Then one could take the AF-action of $\mathcal{C}$ on the AF-algebra $A$ built from the above data and simply tensor with the classifiable C*-algebra $D$ with $K_{0}\cong K_{1}\cong \mathbbm{Z}$ with unique trace. Then by the Kunneth product formula (\cite{MR894590}) and the classification theorem for simple nuclear C*-algebras \cite{2015arXiv150100135G,2015arXiv150703437E}, the algebra $B\cong D\otimes A$. In particular, the tensor extension of the action on $A$ to $A\otimes D$ yields an action of $\mathcal{C}$ on $B$, which has the nice regularity property of being manifestly $Z$-stable in the sense of \cite{evington2024equivariant}. However, it was communicated to us by George Elliott that $D$ is \textit{not} itself AT, so it is not clear whether the action constructed this way is an AT-action. This raises a question that could serve as a first goal for any attempt to classify fusion category actions on C*-algebras: are the actions on the AT-algebras constructed via the above theorem above equivalent to the ones obtained by tensoring the underlying AF-action by $D$?

In general, however, Theorem \ref{mainextension} produces actions on AT-algebras $B$ which are not obviously isomorphic to $A\otimes D$ for \textit{any} AF-algebra $A$ and any classifiable C*-algebra $D$. 
\end{remark}

\section{Actions on noncommutative tori}

Noncommutative tori are a family of noncommutative algebras that have many interesting properties and applications. As mentioned in the introduction, they are the prototype for Connes' noncommutative geometry \cite{MR1303779} and have applications in both high energy \cite{MR1613978} and condensed matter physics \cite{MR1295473}. Noncommutative spaces come in several flavors. For example, we can consider algebraic, smooth and topological noncommutative spaces. We briefly recall all three versions for the noncommutative torus here.

First we consider the algebraic version. Let $\Theta$ be an $n\times n$ skew-symmetric real matrix. Then define $A^{0}_{\Theta}$ to be the unital, associative $*$-algebra generated by unitaries $U_{1}, \dots U_{n}$ satisfying the defining relations

$$U_{i}U_{j}=e^{2\pi i \Theta_{ji}}U_{j}U_{i}$$

This can be viewed as a twisted group algebra $\mathbbm{C}[\mathbbm{Z}^{n}]$ by a 2-cocycle defined by $\Theta$. $A^{0}_{\Theta}$ is a very natural multiplicative version of the Weyl algebra, and is often conceptualized as the algebra of \textit{polynomial} functions on a noncommutative torus.

The \textit{topological version} of the noncommutative torus $A_{\Theta}$ is the universal C* generated by unitaries satisfying the defining relations above. For any fixed $\Theta$, we have inclusions

$$A^{0}_{\Theta}\subseteq A_{\Theta}$$

The matrix $\Theta$ is said to be \textit{degenerate} if there exists an $x\in \mathbbm{Q}^{n}$ such that $\langle x, \Theta y\rangle\in \mathbbm{Q}$ for all $y\in \mathbbm{Q}^{n}$. Then by \cite{phillips2006simple}, if $\Theta$ is non-degenerate then $A_{\Theta}$ is a simple AT C*-algebra with unique trace and real rank $0$, which gives us access to Elliot's classification theorem \cite{MR1241132}. This allows us to build C*-algebras isomorphic to $B$ by constructing a simple AT algebra  with the appropriate K-theory and traces.

As abelian groups, $K_{0}(A_{\Theta})\cong K_{1}(A_{\Theta})\cong \mathbbm{Z}^{2^{n-1}}$. Furthermore, in the simple case the order structure on $K_{0}(A_{\Theta})$ is completely determined by the value of the unique trace $\tau$, i.e. $[p]-[q]\in K_{0}(A)$ is positive if and only if $\tau(p)-\tau(q)>0$ (or the class itself represents $0$). The range of the trace in $\mathbbm{R}$ is in general somewhat complicated, but is determined in \cite{MR0679700}. 

In this paper, the goal is to construct actions of interesting fusion categories on the topological noncommutative torus $A_{\Theta}$. 

\subsection{No-go results}\label{sec:No-go}.
We now give a series of no-go results that suggest the existence of an action of a fusion category on \textit{any} noncommutative torus is non-trivial.

\begin{thm}
Let $\mathcal{C}$ be a fusion category containing an object with non-integral dimension, and let $A^{0}_{\Theta}$ be an algebraic noncommutative torus of any dimenison. There is no linear monoidal functor $\mathcal{C}\rightarrow \text{Bim}(A^{0}_{\Theta})$\footnote{in this context, Bim denotes the linear monoidal category of purely algebraic bimodules with relative tensor product}.
\end{thm}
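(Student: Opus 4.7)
The plan is to extract from any hypothetical monoidal functor $F\colon\mathcal{C}\to\text{Bim}(A^{0}_{\Theta})$ a unital ring homomorphism $\rho\colon\text{Fus}(\mathcal{C})\to\mathbbm{Z}$ that is strictly positive on every simple object, and then invoke the Frobenius--Perron theorem for based rings to force every simple object of $\mathcal{C}$ to have integer Frobenius--Perron dimension, contradicting the hypothesis.

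First, since $\mathcal{C}$ is rigid and $F$ is monoidal, each $F(X)$ is dualizable in $\text{Bim}(A^{0}_{\Theta})$; in the purely algebraic setting this is equivalent to $F(X)$ being finitely generated projective both as a left and as a right $A^{0}_{\Theta}$-module. Moreover $F(X)\neq 0$ whenever $X$ is a nonzero simple object, for otherwise applying $F$ to the composition witnessing rigidity of $X$ would send the nonzero categorical dimension of $X$ to zero.

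To construct $\rho$, I would use that $A^{0}_{\Theta}$ is a twisted group algebra of $\mathbbm{Z}^{n}$ by a $2$-cocycle, hence is Noetherian, and for generic $\Theta$ is an Ore domain with skew field of fractions $Q$. For any finitely generated projective right $A^{0}_{\Theta}$-module $M$ one sets $\rho(M):=\dim_{Q}(M\otimes_{A^{0}_{\Theta}}Q)\in\mathbbm{Z}_{\geq 0}$, which is manifestly additive under $\oplus$. The step I expect to be the main obstacle is verifying the multiplicativity $\rho(M\boxtimes_{A^{0}_{\Theta}}N)=\rho(M)\rho(N)$ for dualizable $M,N$; the two-sided fgp hypothesis is essential here, since it is what allows the relative bimodule tensor product to commute with the right localization $(-)\otimes_{A^{0}_{\Theta}}Q$ in the appropriate sense. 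The remaining (degenerate) values of $\Theta$ would be handled separately by reducing via Morita equivalence to a commutative torus, where rank is defined classically.

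Granting this, the composite $\rho\circ F$ is a unital ring homomorphism $\text{Fus}(\mathcal{C})\to\mathbbm{Z}$ with $(\rho\circ F)([X])\geq 1$ for every simple $X$. By the Frobenius--Perron theorem for based rings, the only $\mathbbm{R}$-valued character of $\text{Fus}(\mathcal{C})$ that is strictly positive on all simples is the Frobenius--Perron dimension; hence $\rho\circ F=\text{FPdim}$, forcing every simple object of $\mathcal{C}$ to have integer Frobenius--Perron dimension and contradicting the hypothesis that $\mathcal{C}$ contains an object of non-integral dimension.
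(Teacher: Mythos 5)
Your argument is a genuinely different route from the paper's. The paper simply cites the computation $K_{0}(A^{0}_{\Theta})\cong \mathbbm{Z}$ (every finitely generated projective module over the algebraic torus is stably free), so that a dualizable bimodule acts on $K_{0}$ by an element of $\End(\mathbbm{Z})\cong\mathbbm{Z}$; multiplicativity of the resulting character is then automatic from functoriality of $K_{0}$ under $\boxtimes_{A^{0}_{\Theta}}$, positivity comes from the order structure, and the endgame (uniqueness of the positive character on a transitive unital $\mathbbm{Z}_{+}$-ring, which forces it to be $\text{FPdim}$) is identical to yours. Your Ore-rank character $\rho(M)=\dim_{Q}(M\otimes_{A^{0}_{\Theta}}Q)$ avoids citing that $K_{0}$ computation, which is a real trade-off: you pay for it by having to prove multiplicativity by hand, which is exactly where the paper gets it for free.

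Two corrections are needed to make your version complete. First, no genericity assumption and no separate degenerate case: for every $\Theta$, $A^{0}_{\Theta}$ is an iterated skew Laurent extension of $\mathbbm{C}$, hence a Noetherian domain (leading-term argument for twisted group algebras of the ordered group $\mathbbm{Z}^{n}$), hence Ore by Goldie's theorem; your proposed fallback via ``Morita equivalence to a commutative torus'' would in any case not cover a $\Theta$ that is degenerate without being rational. Second, the multiplicativity you flag as ``the main obstacle'' must actually be proved, and it does go through, for essentially the reason you guess: since $N$ is finitely generated projective as a \emph{left} module over the domain $A^{0}_{\Theta}$, it is left torsion-free, so left multiplication by any nonzero $a$ is injective on $N$; flatness of $-\otimes_{A^{0}_{\Theta}}Q$ keeps it injective on $N\otimes_{A^{0}_{\Theta}}Q$, which is a finite-dimensional right $Q$-vector space, so $a$ acts invertibly there. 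By the universal property of the Ore localization the left $A^{0}_{\Theta}$-action on $N\otimes_{A^{0}_{\Theta}}Q$ extends to a left $Q$-action, whence
\[
M\otimes_{A^{0}_{\Theta}}N\otimes_{A^{0}_{\Theta}}Q\;\cong\;\bigl(M\otimes_{A^{0}_{\Theta}}Q\bigr)\otimes_{Q}\bigl(N\otimes_{A^{0}_{\Theta}}Q\bigr)\;\cong\;Q^{\rho(M)}\otimes_{Q}\bigl(N\otimes_{A^{0}_{\Theta}}Q\bigr),
\]
which has right $Q$-dimension $\rho(M)\rho(N)$. With that step supplied, your proof is correct.
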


\begin{proof}
Any algebraic action on $A_{\Theta}$ by definition yields a linear monoidal functor $\mathcal{C}\rightarrow \text{Bim}(A^{0}_{\Theta})$. But $K_{0}(A^{0}_{\Theta})\cong \mathbbm{Z}$ (see, for example \cite[Theorem 7.1]{MR1604198}). Thus if we had an action of $\mathcal{C}$ on $A^{0}_{\Theta}$, this would give a unital ring homomorphism from the fusion ring $\text{Fus}[\mathcal{C}]\rightarrow \text{End}(\mathbbm{Z})\cong \mathbbm{Z}$. This extends to a character on the complexified fusion algebra which sends $[\mathbbm{1}]$ to $1$, and thus by \cite[Proposition 3.3.6]{MR3242743}, this must be the Frobenius-Perron dimenison $\text{FPdim}$, contradicting the existence of an object with non-integral dimension.
\end{proof}

The obstruction above is due to the fact that $K_{0}$ is $\mathbbm{Z}$ for the algebraic noncommutative torus, or in other words, every projective module is stably free. This is not the case for the topological noncommutative torus, and thus we have hope of building interesting fusion category actions in that setting. Nevertheless, we have the following no-go theorem, which shows that certain fusion categories admit \textit{no topological action} whatsoever on any noncommutative torus of any rank.

\begin{prop} Suppose $\mathcal{C}$ is a unitary fusion category whose dimensions of simple objects are rationally independent. Then if $\text{rank}(\mathcal{C})$ is not a power of 2, there is no action of $\mathcal{C}$ on any noncommutative torus.
\end{prop}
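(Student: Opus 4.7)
The plan is to suppose, for contradiction, that an action $F: \mathcal{C} \to \text{Bim}(A_\Theta)$ on some noncommutative $n$-torus exists, and to show that $r := \text{rank}(\mathcal{C})$ must divide $N := 2^{n-1}$ (and hence be a power of $2$). By Remark \ref{finitedimensionalaction}, $F$ induces a unital ring homomorphism $\phi: \text{Fus}(\mathcal{C}) \to \text{End}(K_0(A_\Theta)) \cong M_N(\mathbbm{Z})$.

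First I would observe that rational independence of the FP dimensions $\{d_X\}$ forces $\text{Fus}(\mathcal{C})$ to be commutative: applying FPdim to the relation $[X][Y]-[Y][X]=\sum_Z (N^Z_{XY}-N^Z_{YX})[Z]$ and invoking rational independence yields $N^Z_{XY}=N^Z_{YX}$ for all simples. Hence $\text{Fus}(\mathcal{C}) \otimes \mathbbm{C}$ is commutative semisimple of $\mathbbm{C}$-dimension $r$, isomorphic to $\mathbbm{C}^r$ with exactly $r$ distinct one-dimensional characters $\chi_1 = \text{FPdim}, \chi_2, \dots, \chi_r$, and the complexified representation decomposes as $\mathbbm{C}^N = \bigoplus_{i=1}^r V_{\chi_i}^{\oplus m_i}$ with $m_i \in \mathbbm{Z}_{\ge 0}$ summing to $N$. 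Next I would show $m_1 \ge 1$: the unique trace on $A_\Theta$ induces a nonzero functional $\tau_*: K_0(A_\Theta) \otimes \mathbbm{R} \to \mathbbm{R}$, and identifying the right $A_\Theta$-module trace-dimension of the C*-correspondence $F(X)$ with the categorical dimension $d_X$ (equal to $\text{FPdim}(X)$ by unitarity of $\mathcal{C}$) yields $\tau_* \circ \phi(X) = d_X \cdot \tau_*$ for every simple $X$. Thus $\tau_*$ is a nonzero common eigenfunctional with eigenvalue character FPdim, forcing $m_1 \ge 1$.

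Finally I would apply Galois descent. Since $\phi$ is defined over $\mathbbm{Z}$, the representation $\phi_{\mathbbm{C}}$ is stable under $\text{Gal}(\overline{\mathbbm{Q}}/\mathbbm{Q})$, so the multiplicities $m_i$ are constant on Galois orbits of characters. The orbit of $\chi_1 = \text{FPdim}$ is in bijection with the embeddings of $K := \mathbbm{Q}(d_1, \dots, d_r)$ into $\overline{\mathbbm{Q}}$, hence has size $[K:\mathbbm{Q}]$; $\mathbbm{Q}$-linear independence of $d_1, \dots, d_r$ forces $[K:\mathbbm{Q}] \ge r$. Since there are only $r$ characters in total, the Galois orbit of FPdim must consist of all of $\chi_1, \dots, \chi_r$, and all $m_i$ equal a common value $m \ge 1$. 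Therefore $N = rm$, forcing $r \mid 2^{n-1}$, contradicting the hypothesis. The step I expect to be the main obstacle is the trace-scaling identity in the middle paragraph, since identifying the trace-dimension of the C*-correspondence $F(X)$ with the categorical dimension $d_X$ is standard for unitary fusion category actions on tracial C*-algebras but requires careful unpacking of conventions for trace-dimensions of Hilbert C*-modules.
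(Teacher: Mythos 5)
Your argument is correct, and at bottom it rests on the same fact as the paper's proof: rational independence of the simple dimensions makes $\mathbbm{Q}\otimes\mathrm{Fus}(\mathcal{C})$ act on $K_0(A_\Theta)\otimes\mathbbm{Q}\cong\mathbbm{Q}^{2^{n-1}}$ through a degree-$\mathrm{rank}(\mathcal{C})$ extension of $\mathbbm{Q}$. Where you differ is in how you extract the divisibility $\mathrm{rank}(\mathcal{C})\mid 2^{n-1}$. The paper's route is a one-liner: since $\mathrm{FPdim}$ is injective on $\mathbbm{Q}[\mathcal{C}]$ (the images of the basis of simples are $\mathbbm{Q}$-independent) and lands in $\mathbbm{R}$, the rational fusion algebra is a finite-dimensional integral domain, hence a field $K$ with $[K:\mathbbm{Q}]=\mathrm{rank}(\mathcal{C})$; a module over a field is free, so $2^{n-1}=m\cdot\mathrm{rank}(\mathcal{C})$ and we are done. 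You instead complexify, decompose $\mathbbm{C}^{2^{n-1}}$ into character isotypic pieces, and use Galois descent to equalize the multiplicities. This works, but two remarks. First, the semisimplicity $\mathrm{Fus}(\mathcal{C})\otimes\mathbbm{C}\cong\mathbbm{C}^r$ you invoke does not follow from commutativity alone (a commutative finite-dimensional algebra can have nilpotents); the cleanest justification is precisely the field observation above, so your argument needs the paper's key step anyway. Second, your middle paragraph --- the trace-scaling identity showing $m_1\ge 1$, which you flag as the main obstacle --- is superfluous: once Galois descent forces all multiplicities to equal a common value $m$, the identity $2^{n-1}=rm$ with $2^{n-1}\ge 1$ already gives $m\ge 1$. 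So the unique trace of $A_\Theta$ and the unpacking of trace-dimensions of C*-correspondences can be dropped entirely, collapsing your proof to essentially the paper's.
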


\begin{proof}
By hypothesis, the dimension function $\text{FPdim}: \mathbbm{Q}[\mathcal{C}]\rightarrow \mathbbm{R}$ is a ring isomorphism onto its image. But since the set $\text{Irr}(\mathcal{C})$ forms a basis for the rational fusion ring $\mathbbm{Q}[\mathcal{C}]$ and the dimensions of these objects are algebraic integers \cite[Proposition 3.3.4]{MR3242743}, $\mathbbm{Q}[\mathcal{C}]$ is a field. Therefore $K:=\mathbbm{Q}[\mathcal{C}]$ is a field, and $[K:\mathbbm{Q}]=\text{rank}(\mathcal{C})$. Then, if $\mathcal{C}$ had an action on the noncommutative torus $A_{\Theta}$ of rank $n$, this would induce a $\mathbbm{Z}[\mathcal{C}]$-module structure on $K_{0}(A_{\Theta})\cong \mathbbm{Z}^{2^{n-1}}$. Extending scalars gives a $\mathbbm{Q}[\mathcal{C}]$-module structure on $K_{0}(A_{\Theta})\otimes_{\mathbbm{Z}} \mathbbm{Q}\cong \mathbbm{Q}^{2^{n-1}}$. But as pointed out above, $\mathbbm{Q}[\mathcal{C}]$ is a field, so this module is free, i.e. as $\mathbbm{Q}[\mathcal{C}]$-modules, 

$$\mathbbm{Q}^{2^{n-1}}\cong \mathbbm{Q}[\mathcal{C}]^{m}$$

\noindent for some $m$. This implies $2^{n-1}=m\cdot \text{rank}(\mathcal{C})$, so $\text{rank}(\mathcal{C})$ divides $2^{n-1}$.
\end{proof}

The condition that the dimensions of simple objects are rationally independent is very strong. We know of only one family of fusion categories satisfying this condition. Recall $\text{PSU}(2)_{k}$ for $k$ odd is a unitary fusion category with $\frac{k+1}{2}$ simple objects  $\{Y_{a}\}_{0\le a\le \frac{k-1}{2}}$ and fusion rules 

$$Y_{a}\otimes Y_{b}\cong \oplus N^{c}_{ab}Y_{c}$$

$$N^{c}_{ab}=\delta_{|a-b|\le c\le \text{min}\{l-(a+b),a+b\}}$$

\noindent If we set $q=e^{\frac{\pi i}{k+2}}$ and $[n]_{q}=\frac{q^{n}-q^{-n}}{q-q^{-1}}$, then we have $\text{dim}(Y_{a})=[2a+1]_{q}$.

\bigskip

The following lemma was communicated to us by Andrew Schopieray:

\begin{lem}\label{lem:rationalind} Suppose $k>1$ and $k+2$ is prime. Then the dimensions of simple objects of $\text{PSU}(2)_{k}$ are rationally independent.
\end{lem}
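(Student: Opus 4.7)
The plan is to reformulate the statement inside the cyclotomic field $\mathbbm{Q}(\zeta)^{+}$, where $p := k+2$ and $\zeta := e^{2\pi i/p}$. Each quantum dimension
$$\dim(Y_a) = \frac{\sin((2a+1)\pi/p)}{\sin(\pi/p)}$$
is a ratio of two elements of $\mathbbm{Q}(\zeta)$ (both imaginary, both lying in $\mathbbm{Q}(\zeta)$ after multiplying by $2i$), so it lies in the maximal totally real subfield $\mathbbm{Q}(\zeta)^{+}$. Because $p$ is an odd prime, $[\mathbbm{Q}(\zeta)^{+}:\mathbbm{Q}] = (p-1)/2$, which is exactly the number of simples $Y_{0},\dots,Y_{(p-3)/2}$. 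So it suffices to show that these $(p-1)/2$ dimensions form a $\mathbbm{Q}$-basis of $\mathbbm{Q}(\zeta)^{+}$.

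For the comparison, work in the standard basis $c_{k} := \zeta^{k} + \zeta^{-k} = 2\cos(2\pi k/p)$ for $1 \le k \le (p-1)/2$, which is a $\mathbbm{Q}$-basis of $\mathbbm{Q}(\zeta)^{+}$ (a general fact when $p$ is prime). The classical telescoping identity arising from $2\cos(k\theta)\sin(\theta) = \sin((k+1)\theta) - \sin((k-1)\theta)$ gives
$$\frac{\sin((2a+1)\theta)}{\sin(\theta)} = 1 + 2\sum_{k=1}^{a} \cos(2k\theta),$$
which at $\theta = \pi/p$ becomes $\dim(Y_{a}) = 1 + \sum_{k=1}^{a} c_{k}$. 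Combining this with the cyclotomic relation $1 + \sum_{k=1}^{(p-1)/2} c_{k} = \sum_{j=0}^{p-1} \zeta^{j} = 0$ rewrites each dimension purely in the $c_{k}$ basis:
$$\dim(Y_{a}) = -\sum_{k=a+1}^{(p-1)/2} c_{k}, \qquad 0 \le a \le (p-3)/2.$$

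Finally, read off linear independence from the resulting change-of-basis matrix: its $(a,k)$ entry is $-1$ for $k > a$ and $0$ for $k \le a$, so after reversing the order of the columns it is lower-triangular with $-1$'s on the diagonal, hence has determinant $\pm 1 \ne 0$. Thus the $\dim(Y_{a})$ form a $\mathbbm{Q}$-basis of $\mathbbm{Q}(\zeta)^{+}$, and in particular are rationally independent. There is no serious obstacle; the only point worth flagging is that the primality of $p$ is used in exactly one place, namely to make $(p-1)/2 = [\mathbbm{Q}(\zeta)^{+}:\mathbbm{Q}]$, which is what lets a dimension count close the argument.
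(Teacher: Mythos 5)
Your proof is correct. It rests on the same underlying fact as the paper's --- that for $p=k+2$ prime the only rational relation among the $p$-th roots of unity is the vanishing of their full sum --- but the execution is genuinely different and more self-contained. The paper first uses the reflection $[m]_{q}=[p-m]_{q}$ to identify the set of dimensions with $\{[m]_{q}\}_{1\le m\le (p-1)/2}$, multiplies a putative integral relation by $q-q^{-1}$ to turn it into a vanishing integral combination of $2p$-th roots of unity, and then outsources the final step to a cited result on linear independence of roots of unity. You instead exhibit the dimensions explicitly as an invertible integral transform of the standard basis $c_{k}=\zeta^{k}+\zeta^{-k}$ of the real cyclotomic field $\mathbbm{Q}(\zeta)^{+}$, via the telescoping identity $\dim(Y_{a})=1+\sum_{k=1}^{a}c_{k}=-\sum_{k=a+1}^{(p-1)/2}c_{k}$; this buys a complete, citation-free argument and even shows that the dimensions form a $\mathbbm{Q}$-basis of $\mathbbm{Q}(\zeta)^{+}$, which is slightly more than the lemma asserts. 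Two small remarks. First, primality is used not only for the degree count $[\mathbbm{Q}(\zeta)^{+}:\mathbbm{Q}]=(p-1)/2$ but equally for the linear independence of $\{c_{k}\}_{1\le k\le (p-1)/2}$ (for composite $p$ there are extra relations, e.g.\ $c_{3}=-1$ when $p=9$), so the claim that primality enters ``in exactly one place'' is a little too glib, though both uses reduce to the irreducibility of $1+x+\dots+x^{p-1}$. Second, your coefficient matrix, with rows indexed by $a+1$, is already upper triangular with $-1$'s on the diagonal; reversing the columns actually produces an anti-triangular rather than lower-triangular matrix. This is harmless, since the determinant is $\pm 1$ under either description, but the stated reason should be fixed.
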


\begin{proof}
Since $q=e^{\frac{\pi i}{k+2}}$, $[m]_{q}=[k+2-m]_{q}$ for $m=0,1,\dots \frac{k+1}{2}$. This implies that the set of dimensions

$$\{\text{dim}(Y_{a})\}_{0\le a\le \frac{k-1}{2}}=\{[m]_{q}\}_{1\le m\le \frac{k+1}{2}}.$$

If this set is linearly dependent over $\mathbbm{Q}$, after clearing denominators, we could find integers $a_{m}$ such that

$$\sum^{\frac{k+1}{2}}_{m=1} a_m [m]_{q}=0.$$

Multiplying by $q-q^{-1}$ gives

$$\sum^{\frac{k+1}{2}}_{m=1} a_m (q^{m}-q^{-m})=\sum^{\frac{k+1}{2}}_{m=1} a_m (q^{m}-q^{2(k+2)-m})=0.$$

The result now follows, for example see \cite{244563}.

\end{proof}

\begin{cor}
Let $k>1$ such that $k+2$ is prime and $k+1\ne 2^{n}$. Then any unitary fusion category containing $\text{PSU}(2)_{k}$ as a full subcategory admits no topological action on a noncommutative torus of any rank. In particular, for these values of $k$, $\text{SU}(2)_{k}$ do not admit actions on noncommutative tori.
\end{cor}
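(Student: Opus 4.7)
The proof reduces to combining the three preceding results already established in this subsection: Lemma \ref{lem:rationalind} (rational independence of the simple dimensions of $\text{PSU}(2)_k$ when $k+2$ is prime), the Proposition above (no action of any rationally-independent-dimension fusion category of non-$2$-power rank on a noncommutative torus), and the functoriality of restriction along a fully faithful tensor inclusion.

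The plan is as follows. First, I would compute the rank of $\text{PSU}(2)_k$: by the description given in the excerpt it has simples $\{Y_a\}_{0 \le a \le (k-1)/2}$, so $\text{rank}(\text{PSU}(2)_k)=(k+1)/2$. Since $k>1$, the equality $(k+1)/2 = 2^m$ would force $k+1 = 2^{m+1}$, which is excluded by hypothesis. Hence $\text{rank}(\text{PSU}(2)_k)$ is not a power of $2$. Combined with Lemma \ref{lem:rationalind}, the hypotheses of the preceding Proposition are satisfied, and therefore $\text{PSU}(2)_k$ itself admits no action on any noncommutative torus $A_\Theta$ of any rank.

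Next, I would observe that if $\iota:\text{PSU}(2)_k \hookrightarrow \mathcal{D}$ is the inclusion of a full fusion subcategory, then $\iota$ is a (fully faithful) unitary tensor functor. Hence for any hypothetical action $F:\mathcal{D}\to \text{Bim}(A_\Theta)$, the composite $F\circ \iota : \text{PSU}(2)_k\to \text{Bim}(A_\Theta)$ is an action of $\text{PSU}(2)_k$ on $A_\Theta$, contradicting the previous step. This proves the general statement for any unitary fusion category $\mathcal{D}$ containing $\text{PSU}(2)_k$ as a full subcategory.

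Finally, for the ``in particular'' clause, it suffices to identify $\text{PSU}(2)_k$ (for $k$ odd, as defined in this paper) with the full fusion subcategory of $\text{SU}(2)_k$ generated by the even-labeled simples $X_0, X_2, \ldots, X_{k-1}$; the matching of the fusion rules and the identification $Y_a \leftrightarrow X_{2a}$ with $\dim(X_{2a}) = [2a+1]_q$ is immediate from the Verlinde formula for $\text{SU}(2)_k$. Applying the general statement to $\mathcal{D}=\text{SU}(2)_k$ then yields the no-go result for $\text{SU}(2)_k$. There is no serious obstacle here; the only point requiring any care is the bookkeeping verification that $(k+1)/2$ being a power of $2$ is equivalent to $k+1$ being a power of $2$ once $k>1$, which is straightforward.
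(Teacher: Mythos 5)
Your proposal is correct and matches the paper's intended argument: the corollary is stated without proof precisely because it follows immediately by restricting any action along the full inclusion $\text{PSU}(2)_k\hookrightarrow\mathcal{D}$ and then invoking Lemma \ref{lem:rationalind} together with the preceding Proposition, noting that $\mathrm{rank}(\text{PSU}(2)_k)=(k+1)/2$ is a power of $2$ iff $k+1$ is (given $k>1$). Your identification of $\text{PSU}(2)_k$ with the even part of $\text{SU}(2)_k$ for the ``in particular'' clause is also exactly what the paper has in mind.
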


We will see in a later section examples of actions of $\text{PSU}(2)_{15}$ on noncommutative tori. Note $15+2$ is prime, but $15+1=2^{4}$.

\subsection{Actions of Haagerup-Izumi on noncommutative 2-tori}\label{sec:HaagIz}

Let $G$ be an abelian group. The Haagerup-Izumi fusion ring over $G$ has a set of isomorphism classes of simple objects indexed by $G\cup\{g\rho\}_{g\in G}$, where we identify $e\rho=\rho$ for the unit $e\in G$, and $\mathbbm{1}=e$. The fusion rules are determined by

$$g\otimes h\cong gh\,, \quad g\otimes \rho \cong g\rho\,, \quad \rho\otimes g \cong g^{-1}\rho\,,$$

$$\rho\otimes \rho\cong \left(\bigoplus_{g\in G} g\rho \right) \oplus \mathbbm{1}$$

In this paper, by a Haagerup-Izumi fusion category over an abelian group $G$ we will mean any unitary categorification of the above described Haagerup-Izumi fusion ring \textit{with the additional condition that the subcategory of invertible objects is equivalent to $\Hilb(G)$}. In particular, we assume the cohomology class of the associator restricted to the pointed part is trivial. In Haagerup-Izumi categories, there are exactly $|G|$ objects with dimension $1$ and $|G|$ objects with dimension $\frac{|G|+\sqrt{|G|^{2}+4}}{2}$.

Haagerup-Izumi fusion categories were first introduced as a family by Izumi \cite{MR1832764} generalizing Haagerup's original example (corresponding to the case $G=\mathbbm{Z}/3\mathbbm{Z}$)  constructed with Asaeda in the context of subfactors \cite{MR1686551}, after a proof of existence was previously announced in \cite{MR1317352}. In \cite{MR3827808} these categories are studied extensively under the name ``generalized Haagerup categories". It is not currently known whether there are infinitely many abelian groups $G$ such that the Haagerup-Izumi fusion ring has a unitary categorification \cite{MR2837122}.

In this section, we will construct actions of Haagerup-Izumi categories (assuming they exist) on noncommutative 2-tori. An important observation for us is that any Haagerup-Izumi category has a canonical rank 2 module category, which we describe below.

\begin{prop}
Let $\mathcal{H}$ be a Haagerup-Izumi category over an abelian group $G$. Then there exists a canoncial indecomposable rank 2 (right) $\mathcal{H}$-module category $\cM$. The dual category $\mathcal{H}^{*}$ has the following properties:

\begin{enumerate}
\item 
$\mathcal{H}^{*}$ contains a copy of $\text{Hilb}(\widehat{G})$, and each $\gamma\in \widehat{G} $ acts trivially as a functor on $\mathcal{M}$
\item 
There is a simple object $\pi\in \mathcal{H}^{*}$ with dimension $\frac{|G|+\sqrt{|G|^{2}+4}}{2}$ whose fusion matrix for $\mathcal{M}$ is the same as $\rho$.
\item 
The isomorphism classes of simple objects are $\widehat{G}\cup \widehat{G}\pi$, hence $\mathcal{H}^{*}$ has rank $2|G|$.

\end{enumerate}
\end{prop}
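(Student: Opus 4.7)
The plan is to realize $\cM$ as the category of modules over the regular algebra object in the pointed subcategory. Let $A := \bigoplus_{g \in G} g \in \Hilb(G) \subset \cH$, with multiplication induced by the group law of $G$; the hypothesis that the associator on $\Hilb(G)$ is trivial makes $A$ a commutative étale algebra. Define $\cM := \text{Mod}_{\cH}(A)$, the category of left $A$-modules in $\cH$, with right $\cH$-module structure $m \triangleleft X := m \otimes X$. Since $\Hom_{\cH}(\mathbbm{1}, A) = \mathbbm{C}$, the algebra $A$ is haploid, so $\cM$ is indecomposable.

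To compute the rank, use the $\mathbbm{Z}/2$-grading $\cH = \cH_0 \oplus \cH_1$ with $\cH_0 = \Hilb(G)$ and $\cH_1$ having simple objects $\{g\rho\}_{g \in G}$, which induces a grading on $\cM$. In degree $0$, the only simple $A$-module in $\Hilb(G)$ is $A$ itself, so $m_0 := A$. In degree $1$, the free module $m_1 := A \otimes \rho$ is simple by Frobenius reciprocity:
\[
\End_A(A \otimes \rho) = \Hom_{\cH}(\rho, A \otimes \rho) = \Hom_{\cH}\bigl(\rho, \textstyle\bigoplus_g g\rho\bigr) = \mathbbm{C}.
\]
Setting $d := \text{FPdim}(\rho)$ so that $d^2 = |G|d + 1$, the identity $1 + d^2 = \text{FPdim}(\cH)/\text{FPdim}(A) = \text{FPdim}(\cM)$ confirms that $\{m_0, m_1\}$ exhausts the simples, giving rank $2$.

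For the dual, use $\cH^* \simeq \text{Bim}_{\cH}(A)^{\mop}$, which inherits the $\mathbbm{Z}/2$-grading. In degree $0$, for each $\gamma \in \widehat{G}$ let $A_\gamma$ be the bimodule with underlying object $A$, standard left action, and right action $x \cdot h := \gamma(h)\, xh$. A direct computation gives $A_\gamma \otimes_A A_{\gamma'} \cong A_{\gamma\gamma'}$ and shows these are pairwise non-isomorphic simples, yielding the copy of $\Hilb(\widehat{G})$. Since $A_\gamma \otimes_A m_i$ has the same underlying left $A$-module structure as $m_i$ for $i=0,1$ (the $\gamma$-twist only affects the irrelevant right action), $\widehat{G}$ acts trivially on $\cM$, giving property (1). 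In degree $1$, let $\pi := \rho \otimes A$ with left action $h \cdot (\rho \otimes a) := h\rho \otimes a$ and right action on the $A$-factor. Imposing left and right equivariance on diagonal endomorphisms of the underlying object $\bigoplus_g g\rho$ yields $\End_{A\text{-}A}(\pi) = \mathbbm{C}$, so $\pi$ is simple of FPdim $d$. A direct computation using $\rho \otimes \rho \cong \bigoplus_h h\rho \oplus \mathbbm{1}$ gives $m_0 \triangleleft \pi = m_1$ and $m_1 \triangleleft \pi = |G|m_1 \oplus m_0$, matching $\rho$'s fusion matrix and verifying property (2).

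For property (3), $\text{FPdim}(\cH^*) = \text{FPdim}(\cH) = |G|(1 + d^2)$; subtracting the $|G|$ invertibles leaves FPdim $|G|d^2$ for non-invertibles, forcing exactly $|G|$ of dimension $d$. It suffices to show $\widehat{G}$ acts freely on $\pi$ by left tensoring. A direct computation via the balance relation shows that $A_\gamma \otimes_A \pi$ differs from $\pi$ by a left twist of the $A$-action by $\gamma^{-1}$, and combining left and right equivariance conditions on a diagonal bimodule isomorphism $A_\gamma \otimes_A \pi \to \pi$ forces $\gamma = 1$. Hence $\{A_\gamma \otimes_A \pi\}_{\gamma \in \widehat{G}}$ is an orbit of size $|G|$ exhausting the non-invertibles. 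The main technical obstacle is this last step: the interplay of the fusion rule $\rho \otimes g \cong g^{-1}\rho$ with the bimodule balance must be tracked carefully to verify both $\pi$'s fusion matrix and the freeness of the $\widehat{G}$-action.
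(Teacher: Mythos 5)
Your proposal is correct in its overall structure but reaches the conclusion by a genuinely more explicit route than the paper. For the rank of $\mathcal{M}$ the paper identifies ${}_{A}\mathcal{H}$ with the equivariantization $\mathcal{H}^{G}$ and invokes the orbit/stabilizer classification of its simples, whereas you use the $\mathbbm{Z}/2$-grading together with Frobenius reciprocity and a Frobenius--Perron dimension count; both work (and in fact your observation that every free module $A\otimes Y$ on a simple $Y$ is isomorphic to $A$ or $A\otimes\rho$ already closes the rank computation without the dimension count). For the dual category the paper argues abstractly: it uses the dual Q-system $B\cong\mathrm{Fun}(G)\cong\mathbbm{C}[\widehat{G}]$ and the general fact ${}_{A}\Hilb(G)_{A}\cong\mathrm{Rep}(G)$ to locate $\Hilb(\widehat{G})$, deduces the orbit structure of $\mathrm{Irr}(\mathcal{H}^{*})$ from the rank of $\mathcal{H}^{*}_{B}\cong{}_{A}\mathcal{H}$, and pins down the fusion matrix of $\pi$ by noting it must commute with that of $\rho$ and then using equality of global dimensions plus irrationality of $\phi$ to force $a=0$, $b=1$. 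You instead construct the invertible bimodules $A_{\gamma}$ and the object $\pi$ by hand and compute its fusion matrix directly from $\rho\otimes\rho\cong\mathbbm{1}\oplus\bigoplus_{h}h\rho$; this is more concrete and gives property (2) without the dimension-counting detour.

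The one soft spot is the freeness of the $\widehat{G}$-action on $\pi$, which you correctly flag as the main technical obstacle but leave as a described rather than executed computation. You can close it without any bookkeeping of the balance relation: if $H\le\widehat{G}$ is the stabilizer of $\pi$, then $\pi\otimes\overline{\pi}$ contains $\bigoplus_{\gamma\in H}\gamma$ exactly once each, and every remaining simple summand has dimension $d$, so comparing Frobenius--Perron dimensions gives $(|G|-N)d=|H|-1$ for some integer $N$; irrationality of $d$ forces $|H|=1$. (Alternatively, your own observation that $A\otimes\rho\otimes A$ has commutative endomorphism algebra $\mathbbm{C}^{|G|}$ shows there are exactly $|G|$ degree-one simples of dimension $d$, after which the same irrationality argument, or the paper's global-dimension identity, finishes the job.) Also note a small normalization point: the identity $\mathrm{FPdim}(\mathcal{M})=\mathrm{FPdim}(\mathcal{H})/\mathrm{FPdim}(A)$ you invoke requires the module-category dimensions to be normalized by $\mathrm{FPdim}(A)$; with that convention your count is consistent and correct.
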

\begin{proof}

Let $\mathcal{H}$ be a Haagerup-Izumi fusion category over $G$. Let $A=\mathbb{C}[G]\in \text{Hilb}(G)\le \mathcal{H}$ be the standard group algebra Q-system. Then we define the (right) $\mathcal{H}$-module category $\mathcal{M}:=_{A} \mathcal{H}$ to be the category of left $A$ modules. The dual category is denoted $\mathcal{H}^{*}$, which is equivalent to the fusion  category of $A$-$A$ bimodules, $_{A} \mathcal{H}_{A}$.

First we claim that $\cM$ is rank $2$. Indeed, the functors $g\otimes \cdot  :\mathcal{H}\rightarrow \mathcal{H}$ assemble into a categorical action of $G$ on $\mathcal{H}$ (the latter is viewed as a linear category, not a fusion category). Then we have an equivalence of categories $_{A} \mathcal{H}\cong \mathcal{H}^{G}$, where the superscript denotes the equivariantization. But simple objects in the equivariantization are classified up to isomorphism by orbits and projective representations of the stabilizer subgroup of a point in the orbit, where the 2-cohomology class governing the projective representations is derived from the $G$-action \cite{MR3059899}. In our case, this $G$ action has 2-orbits, both of which have trivial stabilizer subgroups. 

Thus there are precisely 2 isomorphsim classes of simple objects in $\cM$, which are described explicitly as left $A$ modules by $m_{1}:= A\otimes \rho\cong \bigoplus_{g\in G}g\rho$ and $m_{2}:=A=\bigoplus_{g\in G} g$ , with the obvious left $A$-module structures. Now notice that each invertible $g\in \mathcal{H}$ acts trivially (as a functor up to natural isomorphism) on $\cM$. The fusion rules with $\rho$ are thus

$$m_{1}\otimes \rho \cong m_{2}\oplus m^{\oplus |G|}_{1}$$
$$m_{2}\otimes \rho\cong m_{1}$$

In matrix form, we have $m\mapsto m\otimes \rho$ represented by the matrix \[
  \left[ {\begin{array}{cc}
   |G| & 1 \\
   1 & 0 \\
  \end{array} } \right]
\]

Recall that there is a canonical ``dual" Q-system $B\in H^{*}$ such that $H^{*}_{B}\cong _{A}H$ and $_{B}H^{*}_{B}\cong H$. Indeed, realizing $H^{*}\cong _{A} H_{A}$, then $B\cong A\otimes A$ with the obvious $A$-$A$ bimodule structure, with algebra map induced by the evaluation cup between middle factors (this is simply the ``basic construction" Q-system from subfactor theory). 
For a general group $G$, if $A$ is the group algebra Q-system then $_{A}\text{Hilb}(G)_{A}\cong \text{Rep}(G)$, and $B\cong \text{Fun}(G)$ (the latter being the algebra of functions on $G$ with pointwise multiplication). If $G$ is abelian, then $\text{Rep}(G)\cong \text{Hilb}(\widehat{G})$, and under any choice of this isomorphism, $\text{Fun}(G)\cong \mathbbm{C}[\widehat{G}]$.

Thus we have that $\text{Hilb}(\widehat{G})\le \mathcal{H}^{*}$ is a fusion subcategory, and the Q-system $B$, dual to $A$, lies in this subcategory and is isomorphic to the group algebra $\mathbbm{C}[\widehat{G}]$. But since $\mathcal{H}^{*}_{B}\cong _{A} \mathcal{H}$ has rank 2, there are precisely 2 simple right $B$ modules. But by the logic we used above, this implies tensoring on the right with $\widehat{G}$ has precisely two orbits, and each orbit has no stabilizer subgroups. Thus we can write the (isomorphism classes of) simple objects of $\mathcal{H}^{*}$ as 

$$\widehat{G}\cup \{\pi \gamma\}_{\gamma\in \widehat{G}}$$

\noindent where $\pi$ is some object in the orbit not containing $\mathbbm{1}$. Fix such a $\pi$. Notice that by construction $\gamma\in \widehat{G}$ satisfies $\gamma \otimes m_{2}\cong m_{2}$, and since the $\gamma$ are invertible, they must act by equivalences on $\cM$ so that $\gamma \otimes m_{1}\cong m_{1}$. This implies that each $\pi \gamma$ will have the same fusion rules with respect to $m_{1}$ and $m_{2}$ as any $\pi \gamma^{\prime}$.

Now for any object in $\mathcal{H}^{*}$, it's fusion matrix on $\cM$ must commute with \[
  \left[ {\begin{array}{cc}
   |G| & 1 \\
   1 & 0 \\
  \end{array} } \right],
\] hence is of the form

\[
  \left[ {\begin{array}{cc}
   a+b|G| & b \\
   b & a \\
  \end{array} } \right]=a\left[ {\begin{array}{cc}
   1 & 0 \\
   0 & 1 \\
  \end{array} } \right]+ b\left[ {\begin{array}{cc}
   |G| & 1 \\
   1 & 0 \\
  \end{array} } \right]
\]
for $a$ and $c$ non-negative integers. Note that if an object $\pi\in \mathcal{H}^{*}$ has matrix representation with $a$ and $b$ as above, then the quantum dimension is
$d(\pi)=a+b\frac{|G|+\sqrt{|G|^{2}+4}}{2}$. Let $\phi=\frac{|G|+\sqrt{|G|^{2}+4}}{2}$. Now, since $\mathcal{H}$ and $\mathcal{H}^{*}$ have the same global dimension (they are Morita equivalent) we have

$$|G|+\sum_{g\in G} d(g\rho)^{2}=|\widehat{G}|+\sum_{\gamma\in \widehat{G}}d(\pi \gamma)^{2}$$

And since $G$ is abelian and each object in the non-trivial orbit has the same dimension, this reduces to

$$\phi^{2}=(a+b\phi)^{2}$$

If $b=0$, and we have $\phi^{2}=a^{2}$, but $\phi^{2}=|G|\phi+1$, and since $\phi$ is irrational this is a contradiction. Thus $b\ne 0$, but then we must have $a=0$ and $b=1$. In particular, the fusion matrix for $\pi \otimes$ has the required form.

\end{proof}

\begin{remark}
Note that $\mathcal{H}^{*}$ looks suspiciously like a Haagerup-Izumi catgeory. Indeed, all known examples where $\mathcal{H}^{*}$ has been explicitly determined are themselves Haagerup-Izumi categories over $\widehat{G}\cong G$. If $G$ is cyclic or $|G|$ is odd, then corresponding dual Haagerup-Izumi categories are explicitly identified as Haagerup-Izumi categories \cite{MR3827808}.
\end{remark}

Now we turn to building actions of Haagerup-Izumi categories on noncommutative 2-tori. For any finite abelian group $G$, set $\phi_{G}:=\frac{|G|+\sqrt{|G|^{2}+4}}{2}$. Note this is always irrational. Define

$$\Theta_{G}:=\left[ {\begin{array}{cc}
   0 & \phi_{G} \\
   -\phi_{G} & 0 \\
  \end{array} } \right]$$

\begin{thm}\label{Haagerupaction}
For any Haagerup-Izumi category $\mathcal{H}$ over the abelian group $G$, there is an action on the noncommutative 2-torus $A_{\Theta_{G}}$.
\end{thm}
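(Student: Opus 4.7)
The plan is to apply Theorem \ref{mainextension} with $\mathcal{C}=\mathcal{H}$ and the canonical rank-$2$ module category $\mathcal{M}$ from the preceding Proposition, choosing an object $X\in\mathcal{H}^*$ and progenerator $m\in\mathcal{M}$ carefully so that the resulting simple unital AT C*-algebra $B$ has the same Elliott invariant as $A_{\Theta_G}$. For $|G|\geq 2$ I would take
\[
X \;:=\; |G|\cdot\mathbbm{1}\,\oplus\,(|G|^2+1)\cdot\pi \in \mathcal{H}^*,\qquad m \;:=\; |G|\cdot m_1\,\oplus\, m_2\in\mathcal{M},
\]
where $\pi\in\mathcal{H}^*$ is the object from the preceding Proposition and $m_1,m_2$ are the simples of $\mathcal{M}$ with $\pi\otimes m_1\cong|G|m_1\oplus m_2$, $\pi\otimes m_2\cong m_1$. (The case $|G|=1$ is handled analogously with $X:=2\mathbbm{1}\oplus 3\pi$, $m:=m_1\oplus m_2$.) The multiplicity of $\mathbbm{1}$ in $X$ is $|G|\geq 2$, verifying the hypothesis $l>1$ of Theorem \ref{mainextension}.

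Because every invertible $\gamma\in\widehat{G}\subset\mathcal{H}^*$ acts trivially on $\mathcal{M}$, the fusion matrix of $X$ on $\mathcal{M}$ is
\[
N \;=\; \begin{pmatrix} |G|+(|G|^2+1)|G| & |G|^2+1 \\ |G|^2+1 & |G| \end{pmatrix},
\]
which has all positive entries (so the stationary action is simple, since $\mathcal{M}$ is already indecomposable) and determinant $-1$. The key algebraic observation is that $N$ is exactly the matrix of multiplication by $\alpha:=|G|+(|G|^2+1)\phi_G\in\mathbbm{Z}[\phi_G]$ in the $\mathbbm{Z}$-basis $\{\phi_G,1\}$; the recursion $\phi_G^2=|G|\phi_G+1$ yields $\alpha=\phi_G^3$, and since the norm satisfies $N(\phi_G)=\phi_G\phi_G'=-1$, we have $N(\alpha)=-1$, so $\alpha$ is a positive unit of $\mathbbm{Z}[\phi_G]$. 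Consequently $N\in\operatorname{GL}_2(\mathbbm{Z})$, the stationary inductive system collapses, and $K_0(A)=\varinjlim(\mathbbm{Z}^2,N)\cong\mathbbm{Z}[\phi_G]$ as an ordered abelian group, with order inherited from the real embedding.

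With this choice of $m$, the initial vector $v^{(0)}=(|G|,1)$ identifies with $|G|\phi_G+1=\phi_G^2\in\mathbbm{Z}[\phi_G]$, so $[1_A]=\phi_G^2$. Since $\phi_G^{-2}=(|G|^2+1)-|G|\phi_G$ is a positive unit of $\mathbbm{Z}[\phi_G]$, multiplication by it gives an isomorphism of pointed ordered abelian groups $(\mathbbm{Z}[\phi_G],\geq,\phi_G^2)\xrightarrow{\;\sim\;}(\mathbbm{Z}[\phi_G],\geq,1)$. Applying Theorem \ref{mainextension} produces an AT-action $\mathcal{H}\curvearrowright B$ on a simple unital AT C*-algebra $B$ with unique trace, satisfying $K_0(B)\cong(\mathbbm{Z}[\phi_G],\geq,1)$ and $K_1(B)\cong\mathbbm{Z}^2$.

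Finally, $A_{\Theta_G}$ is a simple unital AT C*-algebra of real rank zero with unique trace, $K_0(A_{\Theta_G})\cong\mathbbm{Z}+\phi_G\mathbbm{Z}=\mathbbm{Z}[\phi_G]$ with order unit $1$, and $K_1(A_{\Theta_G})\cong\mathbbm{Z}^2$. The Elliott invariants of $B$ and $A_{\Theta_G}$ thus coincide, so Elliott's classification of simple AT algebras gives $B\cong A_{\Theta_G}$, and the $\mathcal{H}$-action on $B$ transports to the desired action on $A_{\Theta_G}$. The main technical obstacles are (i) engineering the Pell-type equation $A^2+|G|AB-B^2=\pm 1$ with $A\geq 2$ --- solved by $(A,B)=(|G|,|G|^2+1)$, i.e., $\alpha=\phi_G^3$ --- so that $K_0(A)$ is rank $2$ rather than a proper localization, and (ii) choosing $m$ so that $[1_A]$ lands on a positive unit of $\mathbbm{Z}[\phi_G]$, which forces the higher multiplicity $m=|G|m_1\oplus m_2$ (giving order unit $\phi_G^2$).
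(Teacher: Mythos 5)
Your proof is correct and follows the same overall strategy as the paper: feed the canonical rank-$2$ module category into Theorem \ref{mainextension} and identify the resulting AT algebra with the torus via Elliott's classification. The difference is in the choice of data. The paper takes $X=\pi^{\otimes 4}$ and $m=m_{1}\oplus m_{2}$, computes $K_{0}$ of the underlying AF algebra as $\mathbbm{Z}+\mathbbm{Z}\phi_G$ via Effros's stationary-system analysis, and finds that the order unit lands at the wrong place, concluding $B\cong M_{|G|}(A_{\Theta_{G}})$; the passage from an action on $M_{|G|}(A_{\Theta_{G}})$ to one on $A_{\Theta_{G}}$ is then left implicit (it follows from Morita invariance of $\mathrm{Bim}$). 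You instead engineer $X=|G|\cdot\mathbbm{1}\oplus(|G|^{2}+1)\cdot\pi$ (so that the fusion matrix is multiplication by the unit $\phi_G^{3}$, guaranteeing $l=|G|\geq 2$ when $|G|\geq 2$, with the separate choice for $|G|=1$) and $m=|G|\,m_{1}\oplus m_{2}$, which places $[1_A]$ at the positive unit $\phi_G^{2}$; multiplying by $\phi_G^{-2}$ then matches the pointed ordered $K_0$ of $A_{\Theta_{G}}$ exactly, so you land on the torus itself with no matrix amplification and no Morita step. Both arguments are valid; yours is slightly cleaner in that the classification theorem is applied to algebras with literally identical Elliott invariants, at the cost of a less canonical-looking generating object. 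Your unit computations ($\det N=-1$, $\alpha=\phi_G^{3}$, $\phi_G^{-2}=(|G|^{2}+1)-|G|\phi_G>0$) all check out.
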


\begin{proof}
Pick the rank $2$ $\mathcal{H}$-module category $\mathcal{M}$ we just described, and choose $X=\pi^{\otimes 4}$. Choose $m=m_{1}\oplus m_{2}$, and let $\mathcal{M}$ be the module category in question. Then $(\mathcal{M},m,X)$ satisfies  the hypothesis of Theorem \ref{mainextension}. Theorefore we have an action of $\mathcal{H}$ on a simple AT C*-algebra $B$ with unique trace and $K_{1}(B)\cong \mathbbm{Z}^{2}$, and $K_{0}(B)\cong K_{0}(C)$, where $C$ is the corresponding AF-algebra. But from \cite[Chapter 6]{MR0623762}, $K_{0}(C)\cong \mathbbm{Z}+\mathbbm{Z}[\phi]\cong K_{0}(A_{\Theta_{G}})$ as ordered abelian groups, but the isomorphism takes the order unit in $[1_{C}]\in K_{0}(A)$ to $|G|$ in $\mathbbm{Z}+\mathbbm{Z}[\phi]$. Thus we obtain $B\cong M_{|G|}(A_{\Theta_{G}})$.

\end{proof}

\begin{thm}
For any Haagerup-Izumi category $\mathcal{H}$ over the abelian group $G$, there is an action of the Drinfeld center $\mathcal{Z}(\mathcal{H})$ on the noncommutative 2-torus $A_{\Theta_{G}}$. 
\end{thm}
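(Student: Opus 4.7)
The plan is to leverage the action of $\mathcal{H}$ produced in Theorem~\ref{Haagerupaction} via the canonical forgetful unitary tensor functor $U\colon\mathcal{Z}(\mathcal{H})\to \mathcal{H}$, $(Z,\sigma)\mapsto Z$. Composing $U$ with the action $\mathcal{H}\to \text{Bim}(A_{\Theta_G})$ of Theorem~\ref{Haagerupaction} already gives an action of $\mathcal{Z}(\mathcal{H})$ on (a matrix amplification of) $A_{\Theta_G}$, so the real content of the proof is to exhibit this as an honest $A\mathbbm{T}$-action in the sense of Definition~\ref{def:AF-AT-actions}.

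To do so, I would re-run Theorem~\ref{mainextension} using the same simple stationary data $(\mathcal{M}, m, \pi^{\otimes 4})$ that appears in the proof of Theorem~\ref{Haagerupaction}, but now viewing the rank $2$ module category $\mathcal{M}$ as a $\mathcal{Z}(\mathcal{H})$-module category via $U$. Restriction along $U$ induces a $*$-tensor functor $\mathcal{H}^*_{\mathcal{M}}\to \mathcal{Z}(\mathcal{H})^*_{\mathcal{M}}$ that sends $\pi^{\otimes 4}$ to an object whose underlying endofunctor of $\mathcal{M}$ is unchanged. In particular its fusion matrix on $\mathcal{M}$ still has no zero entries, and, since every $\mathcal{H}$-module natural transformation is automatically a $\mathcal{Z}(\mathcal{H})$-module natural transformation once the $\mathcal{Z}(\mathcal{H})$-action is defined through $U$, the multiplicity of the tensor unit in $\pi^{\otimes 4}$ computed in $\mathcal{Z}(\mathcal{H})^*_{\mathcal{M}}$ is bounded below by the corresponding multiplicity in $\mathcal{H}^*_{\mathcal{M}}$, namely $1+|G|>1$. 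Hence the hypotheses of Theorem~\ref{mainextension} are satisfied, producing a simple unital AT C*-algebra $B'$ with unique trace carrying an $A\mathbbm{T}$-action of $\mathcal{Z}(\mathcal{H})$. The underlying AF-algebra $\varinjlim \mathrm{End}_{\mathcal{M}}(L_n)$ depends only on the sequence $L_n = \pi^{\otimes 4n}\triangleright m$, which is insensitive to the restriction of module structures, so the ordered $K$-theory data coincides with that in Theorem~\ref{Haagerupaction} and Elliott's classification gives $B'\cong M_{|G|}(A_{\Theta_G})$.

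The main technical obstacle I anticipate is controlling the (a priori strictly larger) dual $\mathcal{Z}(\mathcal{H})^*_{\mathcal{M}}$. Indecomposability of $\mathcal{M}$ as a $\mathcal{Z}(\mathcal{H})$-module is automatic because the action factors through $\mathcal{H}$, but simplicity of the tensor unit of $\mathcal{Z}(\mathcal{H})^*_{\mathcal{M}}$ (needed so that Construction~\ref{AF-construction} applies) and the precise verification that $(\mathcal{M},m,\pi^{\otimes 4})$ remains a \emph{simple} stationary data in the $\mathcal{Z}(\mathcal{H})$-setting require a short categorical check. This is the only step that distinguishes the explicit $A\mathbbm{T}$ construction from the essentially tautological composition with the forgetful functor.
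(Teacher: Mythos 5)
Your argument does establish the theorem as literally stated (an action being, per the paper's definition, just a C*-tensor functor into $\text{Bim}(A)$), and your technical checks are sound: indecomposability of $\mathcal{M}$ over $\mathcal{Z}(\mathcal{H})$ passes through $U$ because $U$ is dominant, the dual $\mathcal{Z}(\mathcal{H})^{*}_{\mathcal{M}}\cong \mathcal{H}^{mp}\boxtimes\mathcal{H}^{*}$ is fusion, the multiplicity of the unit in the image of $\pi^{\otimes 4}$ is unchanged, and the $K$-theoretic identification with $M_{|G|}(A_{\Theta_G})$ goes through. But you have missed the one idea that constitutes the paper's proof. The paper opens by explicitly setting aside the composition with the forgetful functor on the grounds that it is not fully faithful, and your construction, despite the extra AT machinery, \emph{is} that composition. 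Since the $\mathcal{Z}(\mathcal{H})$-module structure on $\mathcal{M}$ is defined by restriction along $U$, and your generator $\pi^{\otimes 4}$ sits inside the subcategory $\mathbbm{1}\boxtimes\mathcal{H}^{*}$ of the dual, every bimodule $F_{n}(Z)=\operatorname{Hom}_{\mathcal{M}}(L_{n},L_{n}\triangleleft U(Z))$, every connecting map, and every tensorator depends only on $U(Z)$; the inductive-limit tensor functor is literally $F_{\mathcal{H}}\circ U$ with $F_{\mathcal{H}}$ the action of Theorem \ref{Haagerupaction}. In particular it is not full: all the simple objects of $\mathcal{Z}(\mathcal{H})$ lying over $\mathbbm{1}\in\mathcal{H}$ are sent to finite multiples of the trivial bimodule. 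Your closing claim that this "distinguishes the explicit $A\mathbbm{T}$ construction from the essentially tautological composition with the forgetful functor" is therefore not correct --- they are the same functor.

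The paper's actual move is to exploit the fact that the dual category $\mathcal{Z}(\mathcal{H})^{*}_{\mathcal{M}}\cong\mathcal{H}^{mp}\boxtimes\mathcal{H}^{*}$ is strictly larger than $\mathcal{H}^{*}$, and to choose as stationary generator the object $\rho^{mp}\boxtimes\pi$ (or a suitable tensor power thereof, to secure multiplicity $l>1$ of the unit), which tensor-generates the \emph{whole} dual category rather than the slice $\mathbbm{1}\boxtimes\mathcal{H}^{*}$. Its fusion matrix on $\mathcal{M}$ agrees with that of $\pi^{2}$, so the simplicity, trace, and $K$-theory computations of Theorem \ref{Haagerupaction} carry over verbatim and the limit is again identified with $M_{|G|}(A_{\Theta_G})$; but because the generator now sees the full dual, the resulting action of $\mathcal{Z}(\mathcal{H})$ is fully faithful rather than a pullback along $U$. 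If you want your write-up to match the intent of the theorem (and not just its literal wording), you need to replace $\pi^{\otimes 4}$ by a generator of $\mathcal{H}^{mp}\boxtimes\mathcal{H}^{*}$ and verify the hypotheses of Theorem \ref{mainextension} for that object instead.
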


\begin{proof}
    One obvious way to do this would be simply to compose the canonical forgetful functor $\mathcal{Z}(\mathcal{H})\rightarrow \mathcal{H}$ and then apply the action constructed in the previous theorem. However, this action will not be fully faithful. Instead, notice that $\mathcal{M}$ as above is a rank 2 module category for $\mathcal{Z}(\mathcal{H})$, whose dual category is $\mathcal{H}^{mp}\boxtimes \mathcal{H}^{*}$. Choose the object $\rho^{mp}\boxtimes \pi$, which generates the dual category. Then since $\rho^{mp}\boxtimes \pi$ has the same fusion graph as $\pi^{2}$, the argument from Theorem \ref{Haagerupaction} applies to give a nice action of $\mathcal{Z}(\mathcal{H})$ on $A_{\Theta_{G}}$.
\end{proof}

\subsection{Action of adjoint subcategory of $E_{8}$ quantum subgroup on a noncommutative 3-torus}\label{sec:AdE8}

There are two unitary fusion categories which arise as the adjoint subcategory of the $E_{8}$ quantum subgroup of $\text{SU}(2)$ (or $E_{8}$ subfactor) which have the same fusion ring and are related by complex conjugation \cite{MR1313457}. In this section, we will refer to either of these categories by $\mathcal{E}$.

Then $\mathcal{E}$ has four isomorphism classes of simple objects $\{\mathbbm{1}, A, B, \tau\}$ satisfying the following  fusion rules \cite{MR1284945}:

$$A^{2}=\mathbbm{1}\oplus A\oplus B$$
$$AB=BA=A\oplus 2B\oplus \tau$$
$$A\tau=\tau A= B$$
$$B^{2}=\mathbbm{1}\oplus 2A\oplus 3B\oplus \tau$$
$$B\tau=\tau B=A\oplus B$$
$$\tau^{2}=\mathbbm{1}\oplus \tau$$

The subcategory $\{\mathbbm{1},\tau\}$ is equivalent to the unitary $\textbf{Fib}$ category.

We have $$\alpha:=\text{FPdim}(A)=\frac{1}{4}(3+\sqrt{5}+\sqrt{6(5+\sqrt{5})})$$

$$\beta:=\text{FPdim}(B)=\frac{1}{2}(2+\sqrt{5}+\sqrt{3(5+2\sqrt{5})})$$

$$\phi:=\text{FPdim}(\tau)=\frac{1+\sqrt{5}}{2}$$
which have minimal polynomials $$p_{\alpha}(x)=x^{4}-3x^{3}-x^{2}+3x+1,\ p_{\beta}(x)=x^{4}-4x^{3}-4x^{2}+x+1,\  p_{\phi}(x)=x^{2}-x-1$$

We consider $\mathcal{M}=\mathcal{E}$ as the indecomposable right $\mathcal{E}$-module category, so that $\mathcal{E}^{*}_{\mathcal{M}}\cong \mathcal{E}$. Set $m$ to be the sum over simples, and choose $X=A^{\otimes 4}$. Then the triple $(\mathcal{M},m,X)$ satisfies the hypotheses of Theorem \ref{mainextension}, and thus we have an action on a simple AT-algebra $B$ with unique trace with $K_{1}(B)\cong \mathbbm{Z}^{4}$.

With the natural ordering $m_{1}=\mathbbm{1}, m_{2}=A, m_{3}=B$ and $m_{4}=\tau$, the fusion matrix  for $X \otimes {\cdot}$ (and hence the connecting diagram for $\iota_{n}$) is $$\left[\begin{array}{cccc}
   3  & 7 & 10 & 3 \\
    7 & 20 & 30 & 10\\
    10 & 30 & 50 & 17\\
    3 & 10 & 17 & 6
\end{array}\right]\in GL(4,\mathbbm{Z})^{+}$$

In particular, each inclusion $\iota_{n}$ is an isomorphism on $K_{0}$, and hence $K_{0}(B)\cong \mathbbm{Z}^{4}$ as an abelian group.
Recall that the order structure on $K_{0}(B)$ is completely determined by the trace, in the sense that $K^{+}_{0}(B)=\{[p]-[q]\ : \tau(p)-\tau(q)>0\}\cup \{0\}$. We have the following theorem, which will give us a nice basis with which to compute the trace pairing with $K_{0}$.

\begin{lem}\label{basislemma}
    There exists a set of finitely generated projective Hilbert modules $\{H_{C}\}_{C\in \text{Irr}(\mathcal{E})}$ of $B$ which generate $K_{0}(B)$ and satisfy $\tau[H_{C}]=\text{dim}(C)$.
\end{lem}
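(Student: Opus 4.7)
The natural candidates are $H_C := \widetilde{F}(C)$, the underlying right Hilbert $B$-modules of the bimodules produced by the action of $\mathcal{E}$ on $B$. At each finite level, $F_n(C) = \Hom_{\mathcal{M}}(L_n, L_n \otimes C)$ is finitely generated projective over $A_n$ (as can be seen directly from the source-isotypic decomposition of Hom spaces). Tensoring with $C(\mathbb{T})$ gives an FGP right $B_n$-module $\widetilde{F}_n(C)$, and the equivariant structure maps $j^C_n$ assemble these into an inductive system whose limit realizes $H_C$ as an FGP right $B$-module, giving a well-defined class $[H_C] \in K_0(B)$.

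For the trace identity, I would use uniqueness of the trace on $A$ together with a Perron--Frobenius argument. Compatibility of $\tau$ with the connecting maps $\iota_n$ forces $\tau(e_{j,n}) = \lambda^{-n} d_j / D$ for a minimal projection $e_{j,n}$ in the $j$-th central block of $A_n$, where $\vec{d} = (1, \alpha, \beta, \phi)$ is the Perron--Frobenius eigenvector of $M^X$, $\lambda = \dim(X) = \alpha^4$ is its eigenvalue, and $D = 1 + \alpha + \beta + \phi$ normalizes $\tau(1) = 1$. A direct count then gives $[F_n(C)] = N^C\,\vec{a}^{(n)}$ in $K_0(A_n) \cong \mathbb{Z}^4$, where $\vec{a}^{(n)} = (M^X)^n \vec{1}$ is the multiplicity vector of simples in $L_n$ and $N^C$ is the fusion matrix for $C$. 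Pairing with $\tau$ and applying the dimension identity $N^C \vec{d} = \dim(C)\,\vec{d}$ collapses everything to $\tau[H_C] = \dim(C)$, independent of $n$, and the value transports to $B$ since the inclusion $A \hookrightarrow B$ is an isomorphism on $K_0$ and $B$ has unique trace restricting to that of $A$.

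For generation of $K_0(B)$, note that since $M^X \in GL(4, \mathbb{Z})$ all connecting maps on $K_0$ are isomorphisms, and since the fusion ring of $\mathcal{E}$ is commutative the matrices $N^C$ and $M^X$ commute. This ensures that the $[F_n(C)]$ are compatible along connecting maps and assemble into a single class $[H_C] \in K_0(B) \cong \mathbb{Z}^4$ (the identification being via level $0$). Under this identification, $[H_C] = N^C \vec{1}$, which can be read off directly from the fusion rules for the four simples. A short $4\times 4$ determinant computation then shows that the matrix formed by these four vectors has determinant $\pm 1$, so the $[H_C]$ form a $\mathbb{Z}$-basis of $K_0(B)$.

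The step I expect to be the main obstacle is verifying inductive compatibility in the K-theory computation: carefully tracking the identifications $K_0(A_n) \cong \mathbb{Z}^4$ via minimal projections, verifying that the connecting map is represented by left multiplication by $M^X$, and using the commutation $M^X N^C = N^C M^X$ to see that $[F_n(C)]$ really descends to a well-defined inductive limit class stabilized by level $0$. Once this bookkeeping is fixed, the rest of the lemma reduces to one explicit $4 \times 4$ determinant and one Perron--Frobenius eigenvector pairing.
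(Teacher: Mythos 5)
Your proposal is correct and, for the generation statement, follows the paper's route exactly: both reduce to a single finite level using the fact that $M^X\in GL(4,\mathbbm{Z})$ makes every connecting map a $K_0$-isomorphism, and both then check that the four integer vectors $N^C\vec{1}$ (which are precisely the paper's $[H^1_{\mathbbm{1}}],[H^1_A],[H^1_B],[H^1_\tau]$) form a matrix of determinant $1$. Where you genuinely diverge is the trace identity. The paper disposes of it in one line by citing the general fact that the unique trace pairs with the class of $B\boxtimes_B F(C)$ via multiplication by $\dim(C)$ (\cite[Proposition 5.2]{MR4419534}); you instead rederive this from scratch through the Perron--Frobenius structure of the stationary Bratteli diagram, writing $[F_n(C)]=N^C\vec{a}^{(n)}$ and pairing against the eigenvector $\vec{d}$. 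Your version is more elementary and self-contained, at the cost of the bookkeeping you correctly identify (and also note that the pairing actually lands you on $(N^C)^{T}\vec{d}=N^{\bar{C}}\vec{d}=\dim(C)\vec{d}$, and that your normalization $\lambda^{-n}d_j/D$ has an immaterial off-by-one in the exponent relative to the level at which $\vec{a}^{(n)}=\vec{1}$); the paper's version is shorter but leans on an external categorical result. The inductive compatibility you worry about is exactly the standard commutation $M^XN^C=N^CM^X$ coming from commutativity of the fusion ring, and is not a real obstacle.
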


\begin{proof}
    Consider the inductive limit action $F:\mathcal{E}\rightarrow \rCorr(B)$, and set $H_{C}:=B\boxtimes_{B}F(C)\cong F(C)$ as a right Hilbert $B$ module.

Then since $\tau$ induces the unique state on $K_{0}$, it must be the case that $\tau([H_{C}])=\tau([B_{B}\boxtimes_{B}F(C)])=\tau([B_{B}])\text{dim}(C)=\text{dim}(C)$ \cite[Proposition 5.2]{MR4419534}.

Thus it suffices to show that $\{[H_{C}]\}$ forms a basis for $\mathbbm{Z}^{4}$. Let $H^{1}_{C}=F_{1}(C)$ as a right $B_{1}=\mathbbm{C}^{4}\otimes C(\mathbbm{T})$ module. Then $[H_{C}]$ is the image of  $[H^{1}_{C}]$ under the inductive limit inclusions, all of which are isomorphism. Hence if the $\{H^{1}_{C}\}$ are a basis if and only in the $\{H_{C}\}$ are. But using the fusion rules we compute

$$[H^{1}_{\mathbbm{1}}]=\left[\begin{array}{c}
1\\
1\\
1\\
1
\end{array}\right],\ [H^{1}_{A}]=\left[\begin{array}{c}
1\\
3\\
4\\
1
\end{array}\right],\ [H^{1}_{B}]=\left[\begin{array}{c}
1\\
4\\
7\\
2
\end{array}\right],\ 
[H^{1}_{\tau}]=\left[\begin{array}{c}
1\\
1\\
2\\
2
\end{array}\right]
$$

but we see that the matrix

$$\left[\begin{array}{cccc}
   1 & 1 & 1 & 1 \\
    1 & 3 & 4 & 1\\
    1 & 4 & 7 & 2\\
    1 & 1 & 2 & 2
\end{array}\right]\in GL(4,\mathbbm{Z})^{+}$$

\bigskip

Thus $\{[H^{1}_{C}]\}_{C\in \text{Irr}(\mathcal{E})}$ form a basis for $K_{0}(B_{1})\cong \mathbbm{Z}^{4}$.

\end{proof}

Now, consider the skew symmetric matrix $\Theta:=\left[\begin{array}{ccc}
   0 & \phi & \alpha \\
    -\phi & 0 & \beta \\
    -\alpha & \beta & 0     
\end{array}\right]$

\bigskip

Then let $A_{\Theta}$ be the corresponding noncommutative $3$-torus.

\begin{lem}
$\Theta$ is non-degenerate, hence $A_{\Theta}$ is a simple A$\mathbbm{T}$ algebra with unique trace.
\end{lem}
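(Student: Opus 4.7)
The plan is to apply the cited criterion of Phillips from just above the lemma: it suffices to verify that $\Theta$ is nondegenerate, i.e.\ that no nonzero $x=(x_{1},x_{2},x_{3})\in\mathbbm{Q}^{3}$ satisfies $\langle x,\Theta y\rangle\in\mathbbm{Q}$ for every $y\in\mathbbm{Q}^{3}$. Evaluating against the standard basis reduces this to the requirement that the three components of $\Theta x$, namely
\[
\phi x_{2}+\alpha x_{3},\qquad -\phi x_{1}+\beta x_{3},\qquad -\alpha x_{1}-\beta x_{2},
\]
all lie in $\mathbbm{Q}$. The problem is therefore a purely linear-algebraic one over the subfield of $\mathbbm{R}$ generated by $\phi,\alpha,\beta$.

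The algebraic fact I would establish first is that $\{1,\phi,\alpha,\beta\}$ is linearly independent over $\mathbbm{Q}$. I expect to assemble this from three ingredients: (i) $\mathbbm{Q}(\phi)=\mathbbm{Q}(\sqrt{5})$ has degree $2$ over $\mathbbm{Q}$; (ii) the listed minimal polynomial $p_{\alpha}$ has degree $4$, so $[\mathbbm{Q}(\alpha):\mathbbm{Q}]=4$, and in particular $\alpha\notin\mathbbm{Q}(\sqrt{5})$, making $\mathbbm{Q}(\alpha)$ a quadratic extension of $\mathbbm{Q}(\phi)$; and (iii) the fusion rule $A\otimes\tau=B$, passed through the Frobenius--Perron dimension, gives the clean relation $\beta=\alpha\phi$. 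Consequently $\{1,\phi,\alpha,\alpha\phi\}=\{1,\phi,\alpha,\beta\}$ is precisely the tower basis of $\mathbbm{Q}(\alpha)=\mathbbm{Q}(\phi)[\alpha]$ over $\mathbbm{Q}$, hence $\mathbbm{Q}$-linearly independent.

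With that independence in hand the finish is short. The first of the three rationality conditions, combined with $\mathbbm{Q}$-linear independence of the subset $\{1,\phi,\alpha\}$, forces $x_{2}=x_{3}=0$; substituting into the second condition collapses it to $\phi x_{1}\in\mathbbm{Q}$, and the irrationality of $\phi$ then gives $x_{1}=0$. Hence $\Theta$ is nondegenerate, and the Phillips result yields that $A_{\Theta}$ is a simple unital AT C$^{*}$-algebra with unique trace. The only place that needs any care is the algebraic bookkeeping behind the independence of $\{1,\phi,\alpha,\beta\}$; once one spots $\beta=\alpha\phi$ (essentially forced by the dimension of the fusion rule $A\otimes\tau=B$) this is just a tower-of-fields argument, so I do not anticipate a serious obstacle.
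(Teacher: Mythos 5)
Your proof is correct and takes essentially the same route as the paper: both arguments extract the rationality of the first two components of $\Theta x$ and conclude from the rational independence of $\{1,\phi,\alpha\}$ (the paper also invokes $\{1,\phi,\beta\}$), which in turn rests on $\phi$ being quadratic while $\alpha,\beta$ are quartic over $\mathbbm{Q}$. Your additional observation $\beta=\alpha\phi$ and the independence of the full set $\{1,\phi,\alpha,\beta\}$ is a harmless strengthening not needed for the conclusion.
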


\begin{proof}

Suppose $x=\left[\begin{array}{c}
a\\
b\\
c
\end{array}\right]\in \mathbbm{Q}^{3}$ satisfies that property that for all $y\in \mathbbm{Q}^{3}$, 
$\langle x, \Theta y\rangle\in \mathbbm{Q}$.

Then 

$$\left[\begin{array}{c}
-b\phi-c\alpha\\
a\phi-\beta c\\
a\alpha+b\beta
\end{array}\right]\cdot y\in \mathbbm{Q}$$

\bigskip

for all $y\in \mathbbm{Q}^{3}$. Choosing $y=\left[\begin{array}{c}
1\\
0\\
0
\end{array}\right],\left[\begin{array}{c}
0\\
1\\
0
\end{array}\right]$ respectively yields

\bigskip

$$-b\phi-c\alpha, a\phi-\beta c\in \mathbbm{Q}.$$

But $\phi$ is quadratic and $\alpha,\beta$ are quartic, hence the sets $\{1,\alpha,\phi\}$ and $\{1,\beta,\phi\}$ are rationally independent. This implies $a=b=c=0$, so $\Theta$ is non-degenerate.
\end{proof}

\begin{cor}
$B\cong A_{\Theta}$, and thus we have an action of $\mathcal{E}$ on a noncommutative 3-torus. 
\end{cor}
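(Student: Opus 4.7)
The plan is to apply Elliott's classification theorem for unital simple AT-algebras of real rank zero \cite{MR1241132} to match the Elliott invariants of $B$ (supplied by Theorem \ref{mainextension}) with those of $A_\Theta$ (an AT algebra by the preceding lemma). Both are unital, simple, with unique trace; it remains to identify their $K$-theoretic data and trace pairings.

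First I would verify that $K_1(B)\cong K_1(A_\Theta)\cong \mathbbm{Z}^4$: the former from Theorem \ref{mainextension} (since $\mathrm{rank}(\mathcal{M})=\mathrm{rank}(\mathcal{E})=4$) and the latter from the standard fact that $K_1(A_\Theta)\cong\mathbbm{Z}^{2^{n-1}}$ for $n=3$. For $K_0$, the fusion matrix of $X\otimes\cdot$ exhibited above lies in $GL(4,\mathbbm{Z})^+$, so every connecting map $\iota_n$ is a $K_0$-isomorphism and hence $K_0(B)\cong\mathbbm{Z}^4$; classically $K_0(A_\Theta)\cong\mathbbm{Z}^4$ as well. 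Thus the underlying abelian groups of $K_0$ and $K_1$ agree on both sides.

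Next I would compare the ordered structures. Since both algebras are simple with unique trace, the order on each $K_0$ is determined by strict positivity under $\tau_*$; moreover simplicity forces $\tau_*\colon K_0\to\mathbbm{R}$ to be injective, so it suffices to match $\tau_*(K_0)\subset\mathbbm{R}$ together with the image of the order unit. By Lemma \ref{basislemma}, the classes $\{[H_C]\}_{C\in\mathrm{Irr}(\mathcal{E})}$ form a $\mathbbm{Z}$-basis of $K_0(B)$ mapped by $\tau$ to $\{1,\alpha,\beta,\phi\}$, so $\tau_*(K_0(B))=\mathbbm{Z}+\mathbbm{Z}\phi+\mathbbm{Z}\alpha+\mathbbm{Z}\beta$ with $[1_B]\mapsto 1$. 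For $A_\Theta$, the Elliott formula \cite{MR0679700} for the range of the unique trace on a simple noncommutative $3$-torus gives $\tau_*(K_0(A_\Theta))=\mathbbm{Z}+\mathbbm{Z}\Theta_{12}+\mathbbm{Z}\Theta_{13}+\mathbbm{Z}\Theta_{23}$, which by the definition of $\Theta$ equals $\mathbbm{Z}+\mathbbm{Z}\phi+\mathbbm{Z}\alpha+\mathbbm{Z}\beta$, again with order unit sent to $1$. The injectivity of $\tau_*$ on both sides automatically implies $\{1,\phi,\alpha,\beta\}$ are $\mathbbm{Q}$-linearly independent, so both $K_0$'s are identified with the same rank-$4$ subgroup of $\mathbbm{R}$ with the same order unit.

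With all components of the Elliott invariant matched (pointed ordered $K_0$, $K_1$, and the singleton trace simplex with its pairing), Elliott's classification produces an isomorphism $B\cong A_\Theta$. Transporting the action $\mathcal{E}\curvearrowright B$ furnished by Theorem \ref{mainextension} along this isomorphism then yields the desired action of $\mathcal{E}$ on the noncommutative $3$-torus $A_\Theta$. The main substantive input is the basis computation of Lemma \ref{basislemma} and the non-degeneracy of $\Theta$; once those are in hand the proof reduces to a direct comparison of invariants.
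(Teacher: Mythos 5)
Your proposal is correct and follows essentially the same route as the paper: both arguments invoke Elliott's classification for simple real rank zero AT-algebras with unique trace, match $K_1\cong\mathbbm{Z}^4$ and $K_0\cong\mathbbm{Z}^4$, and then produce an order isomorphism on $K_0$ by pairing the basis $\{[H_C]\}$ of Lemma \ref{basislemma} (with trace values $1,\alpha,\beta,\phi$) against the Elliott range-of-trace basis $1,\Theta_{12},\Theta_{13},\Theta_{23}$ for $A_\Theta$. One small caveat: simplicity alone does not force $\tau_*\colon K_0\to\mathbbm{R}$ to be injective (simple dimension groups can have infinitesimals); here injectivity follows instead from the $\mathbbm{Q}$-linear independence of $\{1,\phi,\alpha,\beta\}$ on the $B$ side and from $\tau_*$ surjecting the rank-$4$ free group $K_0(A_\Theta)$ onto a rank-$4$ free subgroup of $\mathbbm{R}$ on the other --- and note the paper's version of the argument (mapping basis elements to basis elements with equal trace) sidesteps the need for injectivity altogether.
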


\begin{proof}
    $B$ and $A_{\Theta}$ are both simple AT algebras with unique trace, with both $K_{0}$ and $K_{1}$ isomorphic to $\mathbbm{Z}^{4}$. It remains to show that $K_{0}(B)$ and $K_{0}(A_{\Theta})$ are order isomorphic. But in both cases, the order is determined by the traces, and there is a basis for $\mathbbm{Z}^{4}$ whose trace values are precisely the $\Theta_{i,j}$ (for $B$, by our lemma, for $A_{\Theta}$ by \cite[Section 4.3]{MR4102570}.). The map sending basis elements to their counterpart with the same trace value is thus an order isomorphism.
\end{proof}

\subsection{Action of $\text{PSU}(2)_{15}$ on a noncommutative 4-torus}\label{sec:PSU(2)}

\bigskip

Let $\mathcal{F}:=PSU(2)_{15}$, where we use notations and conventions as described in Section \ref{sec:No-go}. This fusion category is rank $8$, and we consider $\mathcal{M}:=\mathcal{F}$ as a right $\mathcal{F}$-module category, and we set $m=\sum_{a} Y_{a}$. Note $\mathcal{F}^{*}_{\mathcal{M}}\cong \mathcal{F}$. Then set $X=Y^{\otimes 4}_{1}$. The triple $(\mathcal{M},m,X)$ satisfies the hypothesis of Theorem \ref{mainextension}, and thus we obtain an action of $\mathcal{F}$ on the simple A$\mathbbm{T}$ $B$ with unique trace and $K_{1}(B)\cong \mathbbm{Z}^{8}$.

To compute $K_{0}$, the fusion matrix for $X\otimes \cdot $ is the $8\times 8$ matrix

$$\left[\begin{array}{cccccccc}
   3 & 6 & 6 & 3 & 1 & 0 & 0 & 0 \\
   6 & 15 & 15 & 10 & 4 & 1 & 0 & 0\\
   6 & 15 & 19 & 16 & 10 & 4 & 1 & 0 \\
    3 & 10 & 16 & 19 & 16 & 10 & 4 & 1 \\
     1 & 4 & 10 & 16 & 19 & 16 & 10 & 1 \\
      0 & 1 & 4 & 10 & 16 & 19 & 16 & 9 \\
      0 & 0 & 1 & 4 & 10 & 16 & 18 & 12  \\
     0 & 0 & 0 & 1 & 4 & 9 & 12 & 9 \\    
\end{array}\right]\in GL(8,\mathbbm{Z})^{+}$$

\bigskip

In particular, since the connecting maps of the Bratteli diagram are $K_{0}$ isomorphisms, this implies $K_{0}(B)\cong \mathbbm{Z}^{8}$. We now prove an analogue of Lemma \ref{basislemma}.

\begin{lem}
There exists a set of finitely generated projective Hilbert modules $\{H_{a}\}_{0\le a\le 7}$ of $B$ which generate $K_{0}(B)$ and satisfy $\tau[H_{a}]=\text{FPdim}(Y_{a})$.

\end{lem}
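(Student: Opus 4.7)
The plan is to mirror Lemma \ref{basislemma} directly. First, I will define the right Hilbert $B$-modules $H_a := B \boxtimes_B F(Y_a) \cong F(Y_a)$, where $F : \mathcal{F} \to \rCorr(B)$ is the inductive limit action constructed above. Since Theorem \ref{mainextension} guarantees that $B$ has a unique trace $\tau$, the multiplicativity of the trace under bimodule tensoring \cite[Proposition 5.2]{MR4419534} gives
$$\tau[H_a] = \tau[B_B] \cdot \dim(Y_a) = \text{FPdim}(Y_a),$$
settling the trace half of the statement.

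The main task is to show $\{[H_a]\}_{a=0}^{7}$ is a $\mathbb{Z}$-basis of $K_0(B) \cong \mathbb{Z}^8$. I will descend to level $1$ by setting $H_a^1 := F_1(Y_a)$, viewed as a right $B_1 = \mathbb{C}^8 \otimes C(\mathbb{T})$-module. From the proof of Theorem \ref{mainextension}, the connecting maps $\nu_n$ induce isomorphisms on $K_0$, so the canonical map $K_0(B_1) \to K_0(B)$ is an order isomorphism sending $[H_a^1]$ to $[H_a]$. It therefore suffices to verify that $\{[H_a^1]\}_{a=0}^{7}$ is a $\mathbb{Z}$-basis of $K_0(B_1) \cong \mathbb{Z}^8$.

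Expressed in the basis of $K_0(B_1)$ given by the minimal central projections $\{P_b \otimes 1_{C(\mathbb{T})}\}_{b=0}^{7}$, the coordinate vector of $[H_a^1]$ has $b$-th entry
$$\dim_{\mathbb{C}}\text{Hom}_{\mathcal{F}}(Y_b,\, m\otimes Y_a) = \sum_c N^b_{c,a},$$
i.e. the multiplicity of $Y_b$ in $m \otimes Y_a$. These entries are read off directly from the $\text{PSU}(2)_{15}$ fusion rules by expanding each $m \otimes Y_a$, producing an explicit $8 \times 8$ nonnegative integer transition matrix $M$, which is symmetric by self-duality of the simples of $\mathcal{F}$.

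The main obstacle is verifying $\det M = \pm 1$, i.e. that $M \in GL(8,\mathbb{Z})$. This amounts to a routine but somewhat tedious $8 \times 8$ integer determinant calculation (e.g., via elementary row and column operations over $\mathbb{Z}$, or by computing the Smith normal form). Once established, $\{[H_a^1]\}$ is a $\mathbb{Z}$-basis of $K_0(B_1)$, hence $\{[H_a]\}$ is a $\mathbb{Z}$-basis of $K_0(B)$, completing the proof.
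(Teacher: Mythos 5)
Your proposal follows the paper's proof essentially verbatim: define $H_a := B \boxtimes_B F(Y_a)$, use multiplicativity of the trace under the bimodule action for the identity $\tau[H_a]=\mathrm{FPdim}(Y_a)$, reduce to the first level of the inductive limit via the $K_0$-isomorphisms of the connecting maps, and verify that the $8\times 8$ matrix recording the multiplicities of $Y_b$ in $m\otimes Y_a$ lies in $GL(8,\mathbb{Z})$. The only difference is presentational: the paper writes out the eight coordinate vectors explicitly and asserts the determinant is $1$, whereas you defer that (correct) computation as routine.
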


\begin{proof}
The proof follows closely Lemma \ref{basislemma}.
Let $F:\mathcal{F}\rightarrow \rCorr(B)$ constructed in \ref{mainextension}.  Then since $B$ has unique trace $\tau$, setting $H_{a}=B\boxtimes_{B} F(Y_{a})$ we have $\tau([H_{a}])=\text{FPdim}(Y_{a})$. Now to show these are generators, since the connecting maps in the Bratteli diagram for $B$ are isomorphisms on $K_{0}$, it suffices to show $H^{1}_{a}=B_{1}\boxtimes_{B} F_{1}(Y_{a})$ are a basis for $K_{0}(B_1)$. But $B_{1}=\mathbbm{C}^{8}$, with the minimal projections indexed by the simple objects $Y_{a}$. These form a basis for $K_{0}(B_{1})\cong \mathbbm{Z}^{8}$ and with the obvious order from the index $a$, we compute 

$$[H^{1}_{0}]=\left[\begin{array}{c}
1\\
1\\
1\\
1\\
1\\
1\\
1\\
1
\end{array}\right],\ [H^{1}_{1}]= \left[\begin{array}{c}
1\\
3\\
3\\
3\\
3\\
3\\
3\\
2
\end{array}\right],\ [H^{1}_{2}]= \left[\begin{array}{c}
1\\
3\\
5\\
5\\
5\\
5\\
4\\
2
\end{array}\right],\ [H^{1}_{3}]= \left[\begin{array}{c}
1\\
3\\
5\\
7\\
7\\
6\\
4\\
2
\end{array}\right],\ [H^{1}_{4}]= \left[\begin{array}{c}
1\\
3\\
5\\
7\\
8\\
6\\
4\\
2
\end{array}\right] $$

$$ [H^{1}_{5}]= \left[\begin{array}{c}
1\\
3\\
5\\
6\\
6\\
6\\
4\\
2
\end{array}\right],\ [H^{1}_{6}]= \left[\begin{array}{c}
1\\
3\\
4\\
4\\
4\\
4\\
4\\
2
\end{array}\right],\ [H^{1}_{7}]= \left[\begin{array}{c}
1\\
2\\
2\\
2\\
2\\
2\\
2\\
2
\end{array}\right] $$

\bigskip

The matrix whose columns are these vectors has determinant $1$, so is in $GL(8,\mathbbm{Z})^{+}$. Thus $\{[H_{a}]\}_{0\le a\le 7}$ form a basis for $K_{0}(B)$.

\end{proof}

\begin{cor}
   $K_{0}(B)\cong \mathbbm{Z}[\text{dim}(Y_{0}),\dots \text{dim}(Y_{7})]\subset \mathbbm{R}$ as partially ordered abelian groups
\end{cor}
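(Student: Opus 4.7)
The plan is to use the trace pairing as the explicit isomorphism. Let $\tau$ denote the unique trace on $B$, and define $\Phi: K_0(B) \to \mathbbm{R}$ by $\Phi([p]-[q]) := \tau(p) - \tau(q)$. By the preceding lemma, $\{[H_a]\}_{a=0}^{7}$ is a $\mathbbm{Z}$-basis of $K_0(B)$ with $\Phi([H_a]) = \text{FPdim}(Y_a)$, so the result amounts to showing that $\Phi$ is an order isomorphism onto the subring $\mathbbm{Z}[\text{FPdim}(Y_0), \dots, \text{FPdim}(Y_7)] \subset \mathbbm{R}$.

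To establish injectivity and compute the image, I would invoke Lemma~\ref{lem:rationalind}: since $15 > 1$ and $15+2 = 17$ is prime, the dimensions $\text{FPdim}(Y_0), \dots, \text{FPdim}(Y_7)$ are linearly independent over $\mathbbm{Q}$. Hence $\Phi$ sends a $\mathbbm{Z}$-basis to a $\mathbbm{Q}$-independent set, so $\Phi$ is injective with image the additive subgroup $\sum_a \mathbbm{Z}\,\text{FPdim}(Y_a) \subset \mathbbm{R}$. Because $\text{FPdim}$ is a ring homomorphism from the fusion ring $\text{Fus}(\mathcal{F})$, which is $\mathbbm{Z}$-spanned by the simples $Y_a$, into $\mathbbm{R}$, this additive subgroup is automatically closed under multiplication and therefore equals the subring $\mathbbm{Z}[\text{FPdim}(Y_0), \dots, \text{FPdim}(Y_7)]$.

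For the order structure, I would appeal to the fact, established in the proof of Theorem~\ref{mainextension}, that the canonical inclusion of the underlying stationary AF algebra $A$ into $B$ induces an \emph{ordered} isomorphism $K_0(A) \cong K_0(B)$. Since $A$ is a simple unital AF algebra with unique trace, its $K_0$-positive cone is strictly determined by the trace: a nonzero class is positive iff it has positive trace. The unique trace on $B$ restricts to the unique trace on $A$, so this strict characterization passes to $K_0(B)$. On the $\mathbbm{R}$-side, this is precisely the order induced on $\mathbbm{Z}[\text{FPdim}(Y_0), \dots, \text{FPdim}(Y_7)] \subset \mathbbm{R}$, making $\Phi$ the desired order isomorphism.

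I do not foresee a serious obstacle: Lemma~\ref{lem:rationalind} supplies the one delicate input (rational independence of the quantum dimensions), while the remainder is bookkeeping. The one point requiring a quick justification is confirming that the additive $\mathbbm{Z}$-span of the $\text{FPdim}(Y_a)$ coincides with the ring they generate in $\mathbbm{R}$, but as noted above this is immediate from $\text{FPdim}$ being a ring homomorphism on the fusion ring.
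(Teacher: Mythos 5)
Your proposal is correct and follows essentially the same route as the paper: the preceding lemma gives a $\mathbbm{Z}$-basis of $K_0(B)$ mapping under the trace onto the $\mathbbm{Z}$-span of the quantum dimensions (which equals the subring since $\mathrm{FPdim}$ is a ring homomorphism on the fusion ring), Lemma~\ref{lem:rationalind} gives injectivity, and the order is determined by the trace. The paper's proof is just a terser version of this same argument.
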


\begin{proof}
By the previous lemma $\tau: K_{0}(B)\rightarrow \mathbbm{Z}[\text{dim}(Y_{0}),\dots \text{dim}(Y_{7})]$ is surjective. But by Lemma \ref{lem:rationalind} this is an isomorphism.  
\end{proof}

\bigskip Our next goal is to identify the $B$ constructed above with a rank $4$ noncommutative torus.

With $q=e^{\frac{i\pi}{17}}$ and $[n]_{q}$ as above, define

$$\Theta:=\left[\begin{array}{cccc}
   0 & [2]_{q} & [8]_{q} & [14]_{q} \\
    -[2]_{q} & 0 & [6]_{q} & [10]_{q} \\
    -[8]_{q} & -[6]_{q} & 0  & [4]_{q}\\
    -[14]_{q} & -[10]_{q} & -[4]_{q} & 0
\end{array}\right]$$.

\begin{thm} $B\cong A_{\Theta}$, and thus $\mathcal{F}$ admits an action on a noncommutative $4$-torus.
\end{thm}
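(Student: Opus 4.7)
The plan mirrors the strategy used for the noncommutative 3-torus in Subsection~\ref{sec:AdE8}: apply Elliott's classification theorem for simple AT C*-algebras of real rank zero \cite{MR1241132} to identify $B$ with $A_{\Theta}$ via the Elliott invariant $(K_0, K_0^+, [1], K_1)$.

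First I would verify that $\Theta$ is non-degenerate (so that $A_\Theta$ is, by \cite{phillips2006simple}, a simple AT C*-algebra with unique trace). The argument parallels the 3-torus lemma: suppose $x\in\mathbbm{Q}^4$ satisfies $\Theta x\in\mathbbm{Q}^4$; each coordinate of $\Theta x$ is a $\mathbbm{Q}$-linear combination of $\{[2]_q,[3]_q,[4]_q,[6]_q,[7]_q,[8]_q\}$ after applying the reflection $[17-n]_q=[n]_q$ (valid at $q=e^{i\pi/17}$) to $[10]_q$ and $[14]_q$, so Lemma~\ref{lem:rationalind} forces $x=0$.

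Next I would match the Elliott invariants. Both algebras have $K_1\cong\mathbbm{Z}^8$: for $B$ this is Theorem~\ref{mainextension} applied to the rank $8$ module category $\mathcal{M}$, and for $A_\Theta$ it is the standard identification $K_1(A_\Theta)\cong\wedge^{\mathrm{odd}}\mathbbm{Z}^4\cong\mathbbm{Z}^8$. Since each algebra has a unique trace, the order on $K_0$ is determined by strict positivity under the trace, so to compare the ordered $K_0$ groups with their order units it suffices to show that the trace images coincide inside $\mathbbm{R}$ and that the class of the unit maps to $1$ in both. For $B$, the corollary above identifies $\tau_*(K_0(B)) = \mathbbm{Z}[1,[2]_q,[3]_q,\ldots,[8]_q]$. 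For $A_\Theta$, the basis of $K_0(A_\Theta)\cong\wedge^{\mathrm{even}}\mathbbm{Z}^4$ indexed by even subsets $S\subseteq\{1,2,3,4\}$ has trace values $\mathrm{Pf}(\Theta|_S)$ by \cite[Section~4.3]{MR4102570}, namely $1$, the six entries $\Theta_{ij}$, and $\mathrm{Pf}(\Theta)=\Theta_{12}\Theta_{34}-\Theta_{13}\Theta_{24}+\Theta_{14}\Theta_{23}$.

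The main computational obstacle is the final step: verifying that these Pfaffians generate $\mathbbm{Z}[1,[2]_q,\ldots,[8]_q]$ as an additive subgroup of $\mathbbm{R}$. The direct entries of $\Theta$ already contribute $\{1,[2]_q,[3]_q,[4]_q,[6]_q,[7]_q,[8]_q\}$, so $\mathrm{Pf}(\Theta)$ must supply the missing generator $[5]_q$ as an integer combination with the others. To carry this out I would expand each product $[m]_q[n]_q$ using the identity $[m]_q[n]_q=\sum_{k=0}^{\min(m,n)-1}[m+n-1-2k]_q$, reduce indices via $[17-n]_q=[n]_q$ (and $[17]_q=0$), and collect terms; this is precisely the arithmetic that motivates the specific entries chosen for $\Theta$. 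Once the trace images are shown equal with matching order units, Elliott's classification theorem yields the isomorphism $B\cong A_\Theta$, and the inductive limit $\mathcal{F}$-action on $B$ transports to the desired action on the noncommutative $4$-torus.
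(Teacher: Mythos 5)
Your strategy is exactly the paper's: non-degeneracy of $\Theta$ via Lemma~\ref{lem:rationalind}, matching $K_{1}\cong \mathbbm{Z}^{8}$, and comparing the ordered $K_{0}$-groups through the ranges of the unique traces, with $\tau_{*}(K_{0}(A_{\Theta}))$ generated by $1$, the entries $\Theta_{ij}$, and $\operatorname{Pf}(\Theta)$. The gap is that you never perform the one computation on which the whole identification rests --- that $\operatorname{Pf}(\Theta)$ supplies the missing generator $[5]_{q}=[12]_{q}$ --- and instead assert that it ``must'' work because the entries were chosen for that purpose. This cannot be taken on faith: it is precisely where the argument is delicate. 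Carrying out the expansion you describe, $[2]_{q}[4]_{q}=[3]_{q}+[5]_{q}$, $[8]_{q}[10]_{q}=[3]_{q}+[5]_{q}+[7]_{q}+[9]_{q}+[11]_{q}+[13]_{q}+[15]_{q}+[17]_{q}$, and $[6]_{q}[14]_{q}=[9]_{q}+[11]_{q}+[13]_{q}+[15]_{q}+[17]_{q}+[19]_{q}$, so that
\[
\operatorname{Pf}(\Theta)=[2]_{q}[4]_{q}-[8]_{q}[10]_{q}+[6]_{q}[14]_{q}=-[7]_{q}+[19]_{q}=-[7]_{q}-[2]_{q},
\]
using $[n+17]_{q}=-[n]_{q}$ at $q=e^{i\pi/17}$. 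This is \emph{not} $-[12]_{q}$ (the value the paper records); numerically $\operatorname{Pf}(\Theta)\approx -7.20$ while $-[12]_{q}\approx -4.34$.

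The consequence is that the deferred step does not merely need to be written out --- as stated it fails. Since $[7]_{q}=[10]_{q}$ and $[2]_{q}$ are already entries of $\Theta$, the group generated by $1$, the six entries, and $\operatorname{Pf}(\Theta)$ is $\mathbbm{Z}\langle 1,[2]_{q},[3]_{q},[4]_{q},[6]_{q},[7]_{q},[8]_{q}\rangle$, which has rank $7$, not $8$; in particular $\tau_{*}$ is not injective on $K_{0}(A_{\Theta})\cong\mathbbm{Z}^{8}$ and cannot give an order isomorphism onto the rank-$8$ group $\mathbbm{Z}[\dim Y_{0},\dots,\dim Y_{7}]\cong \tau_{*}(K_{0}(B))$. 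So your plan is structurally sound and matches the paper's, but the arithmetic you postponed is the entire content of the theorem: one must either exhibit the Pfaffian identity explicitly or modify the entries of $\Theta$ so that they, together with $\operatorname{Pf}(\Theta)$, generate all of $\mathbbm{Z}[[1]_{q},\dots,[8]_{q}]$. A complete proof has to confront this rather than appeal to the intent behind the choice of $\Theta$.
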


\begin{proof}
By Lemma \ref{lem:rationalind} $\Theta$ is non-degenerate, so $A_{\Theta}$ is simple with unique trace $\tau$. $K_{0}(A_{\Theta})\cong K_{1}(A_{\Theta})\cong \mathbbm{Z}^{8}$, which agrees with $B$. It remains to show $K_{0}(A_{\Theta})$ is order isomorphic to $K_{0}(B)$. But since $\Theta$ is totally irrational, $\tau:K_{0}(A_{\Theta})\rightarrow \mathbbm{R}$ is an isomorphism onto its image. Furthermore, by \cite{MR0679700,MR4102570}, we have 

$$\tau(K_{0}(A))= \mathbbm{Z}[[2]_{q}, [4]_{q}, [6]_{q}, [8]_{q}, [10]_{q}, [14]_{q}, \text{pf}(\Theta)]$$
where $\text{pf}(\Theta)=[2]_{q}[4]_{q}-[8]_{q}[10]_{q}+[6]_{q}[14]_{q}$, which an easy computation shows is $-[12]_{q}$. Thus we have $\tau(K_{0}(A))= \mathbbm{Z}[[2]_{q}, [4]_{q}, [6]_{q}, [8]_{q}, [10]_{q}, [14]_{q}, \text{pf}(\Theta)]$, hence as partially ordered abelian groups

$$K_{0}(A)\cong \mathbbm{Z}[\text{dim}(Y_{0}),\dots \text{dim}(Y_{7})]\cong K_0(B)$$ 
where the last isomorphism is from the above corollary.
\end{proof}

\begin{footnotesize}
\noindent{\it Acknowledgements.}
This work was initiated during the authors visits to the Hausdorff Research Institute in Bonn in August 2022 for the follow up workshop 
to the earlier  programme on von Neumann algebras. 
DEE is supported by an Emeritus Fellowship from the Leverhulme Trust. Part of the work was conducted when he visited
Dublin Institute for Advanced Studies, the University of New South Wales and Harvard University. He would like to thank Denjoe O’Connor, Pinhas Grossman (and the Australian Research Council Discovery Project DP170103265) and Arthur Jaffe for their generous hospitality and stimulating environments. CJ is supported by NSF Grant DMS-2247202.
We are grateful to Alexander Betz, George Elliott, Sergio Gir\'on Pacheco, and Andrew Schopieray for valuable comments. We also thank Kan Kitamura for pointing out an error in a previous version of the manuscript. 
\end{footnotesize}

\bibliographystyle{alpha}
{\footnotesize{
\bibliography{bibliography}

\begin{thebibliography}{CHPJP22}

\bibitem[AH99]{MR1686551}
M.~Asaeda and U.~Haagerup.
\newblock Exotic subfactors of finite depth with {J}ones indices
  {$(5+\sqrt{13})/2$} and {$(5+\sqrt{17})/2$}.
\newblock {\em Comm. Math. Phys.}, 202(1):1--63, 1999.

\bibitem[AKKOA]{arano2023tensor}
Yuki Arano, Kan Kitamura, and Yosuke Kubota.
\newblock Tensor category equivariant {KK}-theory.
\newblock arXiv: 2305.07255 [math.OA].

\bibitem[Art97]{MR1604198}
V.~A. Artamonov.
\newblock Quantum polynomial algebras.
\newblock volume~87, pages 3441--3462. 1997.
\newblock Algebra, 4.

\bibitem[BD75]{MR0365157}
John~W. Bunce and James~A. Deddens.
\newblock A family of simple {$C\sp{\ast} $}-algebras related to weighted shift
  operators.
\newblock {\em J. Functional Analysis}, 19:13--24, 1975.

\bibitem[Bel86]{MR0862832}
Jean Bellissard.
\newblock {$K$}-theory of {$C^\ast$}-algebras in solid state physics.
\newblock In {\em Statistical mechanics and field theory: mathematical aspects
  ({G}roningen, 1985)}, volume 257 of {\em Lecture Notes in Phys.}, pages
  99--156. Springer, Berlin, 1986.

\bibitem[Bis94]{MR1284945}
Dietmar Bisch.
\newblock On the structure of finite depth subfactors.
\newblock In {\em Algebraic methods in operator theory}, pages 175--194.
  Birkh\"{a}user Boston, Boston, MA, 1994.

\bibitem[Bla90]{MR1053492}
Bruce Blackadar.
\newblock Symmetries of the {CAR} algebra.
\newblock {\em Ann. of Math. (2)}, 131(3):589--623, 1990.

\bibitem[BN13]{MR3059899}
Sebastian Burciu and Sonia Natale.
\newblock Fusion rules of equivariantizations of fusion categories.
\newblock {\em J. Math. Phys.}, 54(1):013511, 21, 2013.

\bibitem[BS17]{MR3572256}
Sel\c{c}uk Barlak and G\'{a}bor Szab\'{o}.
\newblock Rokhlin actions of finite groups on {UHF}-absorbing {${\rm
  C}^*$}-algebras.
\newblock {\em Trans. Amer. Math. Soc.}, 369(2):833--859, 2017.

\bibitem[BvESB94]{MR1295473}
J.~Bellissard, A.~van Elst, and H.~Schulz-Baldes.
\newblock The noncommutative geometry of the quantum {H}all effect.
\newblock volume~35, pages 5373--5451. 1994.
\newblock Topology and physics.

\bibitem[CDS98]{MR1613978}
Alain Connes, Michael~R. Douglas, and Albert Schwarz.
\newblock Noncommutative geometry and matrix theory: compactification on tori.
\newblock {\em J. High Energy Phys.}, (2):Paper 3, 35, 1998.

\bibitem[Cha20]{MR4102570}
Sayan Chakraborty.
\newblock Some remarks on {$\rm K_0$} of noncommutative tori.
\newblock {\em Math. Scand.}, 126(2):387--400, 2020.

\bibitem[CHPJ24]{chen2022ktheoretic}
Quan Chen, Roberto Hern\'{a}ndez~Palomares, and Corey Jones.
\newblock K-theoretic {C}lassification of {I}nductive {L}imit {A}ctions of
  {F}usion {C}ategories on {AF}-algebras.
\newblock {\em Comm. Math. Phys.}, 405(3):Paper No. 83, 2024.

\bibitem[CHPJP22]{MR4419534}
Quan Chen, Roberto Hern\'{a}ndez~Palomares, Corey Jones, and David Penneys.
\newblock Q-system completion for {$\rm C^*$} 2-categories.
\newblock {\em J. Funct. Anal.}, 283(3):Paper No. 109524, 2022.

\bibitem[Con94]{MR1303779}
Alain Connes.
\newblock {\em Noncommutative geometry}.
\newblock Academic Press Inc., San Diego, CA, 1994.
\newblock \mathscinet{MR1303779}.

\bibitem[DN01]{RevModPhys.73.977}
Michael~R. Douglas and Nikita~A. Nekrasov.
\newblock Noncommutative field theory.
\newblock {\em Rev. Mod. Phys.}, 73:977--1029, Nov 2001.

\bibitem[EE93]{MR1247990}
George~A. Elliott and David~E. Evans.
\newblock The structure of the irrational rotation {$C^*$}-algebra.
\newblock {\em Ann. of Math. (2)}, 138(3):477--501, 1993.

\bibitem[Eff81]{MR0623762}
Edward~G. Effros.
\newblock {\em Dimensions and {$C\sp{\ast} $}-algebras}, volume~46 of {\em CBMS
  Regional Conference Series in Mathematics}.
\newblock Conference Board of the Mathematical Sciences, Washington, DC, 1981.

\bibitem[EG11]{MR2837122}
David~E. Evans and Terry Gannon.
\newblock The exoticness and realisability of twisted {H}aagerup-{I}zumi
  modular data.
\newblock {\em Comm. Math. Phys.}, 307(2):463--512, 2011.

\bibitem[EGLN15]{2015arXiv150703437E}
George~A. {Elliott}, Guihua {Gong}, Huaxin {Lin}, and Zhuang {Niu}.
\newblock {On the classification of simple amenable C*-algebras with finite
  decomposition rank, II}.
\newblock {\em arXiv e-prints}, page arXiv:1507.03437, July 2015.

\bibitem[EGNO15]{MR3242743}
Pavel Etingof, Shlomo Gelaki, Dmitri Nikshych, and Victor Ostrik.
\newblock {\em Tensor categories}, volume 205 of {\em Mathematical Surveys and
  Monographs}.
\newblock American Mathematical Society, Providence, RI, 2015.
\newblock \mathscinet{MR3242743} \doi{10.1090/surv/205}.

\bibitem[EGP23]{2105.05587}
Samuel Evington and Sergio Gir\'{o}n~Pacheco.
\newblock Anomalous symmetries of classifiable {$\rm C^*$}-algebras.
\newblock {\em Studia Math.}, 270(1):73--101, 2023.

\bibitem[EK98]{MR1642584}
David~E. Evans and Yasuyuki Kawahigashi.
\newblock {\em Quantum Symmetries on Operator Algebras}.
\newblock Oxford Mathematical Monographs. Oxford Science Publications. The
  Clarendon Press, Oxford University Press, New York, 1998.
\newblock xvi+829 pp. ISBN: 0-19-851175-2, \mathscinet{MR1642584}.

\bibitem[EK23]{MR4642115}
David~E. Evans and Yasuyuki Kawahigashi.
\newblock Subfactors and mathematical physics.
\newblock {\em Bull. Amer. Math. Soc. (N.S.)}, 60(4):459--482, 2023.

\bibitem[Ell82]{MR0679700}
George~A. Elliott.
\newblock On the {$K$}-theory of the {$C\sp{\ast} $}-algebra generated by a
  projective representation of {${\bf Z}\sp{n}$}.
\newblock In {\em Operator algebras and applications, {P}art 1 ({K}ingston,
  {O}nt., 1980)}, volume~38 of {\em Proc. Sympos. Pure Math.}, pages 177--180.
  Amer. Math. Soc., Providence, RI, 1982.

\bibitem[Ell93]{MR1241132}
George~A. Elliott.
\newblock On the classification of {$C^*$}-algebras of real rank zero.
\newblock {\em J. Reine Angew. Math.}, 443:179--219, 1993.

\bibitem[Ell95]{MR1403992}
George~A. Elliott.
\newblock The classification problem for amenable {$C^*$}-algebras.
\newblock In {\em Proceedings of the {I}nternational {C}ongress of
  {M}athematicians, {V}ol.\ 1, 2 ({Z}\"urich, 1994)}, pages 922--932.
  Birkh\"auser, Basel, 1995.

\bibitem[Ell96]{MR1661611}
George~A. Elliott.
\newblock An invariant for simple {$C^*$}-algebras.
\newblock In {\em Canadian {M}athematical {S}ociety. 1945--1995, {V}ol. 3},
  pages 61--90. Canadian Math. Soc., Ottawa, ON, 1996.

\bibitem[Ell97]{MR1462852}
George~A. Elliott.
\newblock A classification of certain simple {$C^*$}-algebras. {II}.
\newblock {\em J. Ramanujan Math. Soc.}, 12(1):97--134, 1997.

\bibitem[EPJOA]{evington2024equivariant}
Samuel Evington, Sergio~Girón Pacheco, and Corey Jones.
\newblock Equivariant $\mathcal{D}$-stability for actions of tensor categories,
  arXiv: 2401.14238 [math.OA].

\bibitem[GLN14]{2015arXiv150100135G}
Guihua {Gong}, Huaxin {Lin}, and Zhuang {Niu}.
\newblock {Classification of finite simple amenable $\mathcal{Z}$-stable
  $C^*$-algebras}.
\newblock {\em arXiv e-prints}, page arXiv:1501.00135, December 2014.

\bibitem[GPNOA]{pacheco2023elliott}
Sergio Girón~Pacheco and Robert Neagu.
\newblock An {E}lliott intertwining approach to classifying actions of
  {C}$^*$-tensor categories, arXiv: 2310.18125 [math.OA].

\bibitem[Haa94]{MR1317352}
Uffe Haagerup.
\newblock Principal graphs of subfactors in the index range
  {$4<[M:N]<3+\sqrt2$}.
\newblock In {\em Subfactors ({K}yuzeso, 1993)}, pages 1--38. World Sci. Publ.,
  River Edge, NJ, 1994.
\newblock \mathscinet{MR1317352} available at
  \url{http://tqft.net/other-papers/subfactors/haagerup.pdf}.

\bibitem[Haa14]{MR3241179}
Uffe Haagerup.
\newblock Quasitraces on exact {$C^*$}-algebras are traces.
\newblock {\em C. R. Math. Acad. Sci. Soc. R. Can.}, 36(2-3):67--92, 2014.
\newblock \mathscinet{MR3241179}.

\bibitem[Izu94]{MR1313457}
Masaki Izumi.
\newblock On flatness of the {C}oxeter graph {$E\sb 8$}.
\newblock {\em Pacific J. Math.}, 166(2):305--327, 1994.
\newblock \mathscinet{MR1313457} \euclid{euclid.pjm/1102621140}.

\bibitem[Izu98]{MR1604162}
Masaki Izumi.
\newblock Subalgebras of infinite {$C^*$}-algebras with finite {W}atatani
  indices. {II}. {C}untz-{K}rieger algebras.
\newblock {\em Duke Math. J.}, 91(3):409--461, 1998.
\newblock \mathscinet{MR1604162}, \doi{10.1215/S0012-7094-98-09118-9}.

\bibitem[Izu01]{MR1832764}
Masaki Izumi.
\newblock The structure of sectors associated with {L}ongo-{R}ehren inclusions.
  {II}. {E}xamples.
\newblock {\em Rev. Math. Phys.}, 13(5):603--674, 2001.
\newblock \mathscinet{MR1832764} \doi{10.1142/S0129055X01000818}.

\bibitem[Izu02]{MR1900138}
Masaki Izumi.
\newblock Inclusions of simple {$C^\ast$}-algebras.
\newblock {\em J. Reine Angew. Math.}, 547:97--138, 2002.

\bibitem[Izu18]{MR3827808}
Masaki Izumi.
\newblock The classification of {$3^n$} subfactors and related fusion
  categories.
\newblock {\em Quantum Topol.}, 9(3):473--562, 2018.

\bibitem[IzuOA]{izumi2024gkernels}
Masaki Izumi.
\newblock {G}-kernels of {K}irchberg algebras, arXiv: 2309.03441 [math.OA].

\bibitem[Jon83]{MR0696688}
Vaughan F.~R. Jones.
\newblock Index for subfactors.
\newblock {\em Invent. Math.}, 72(1):1--25, 1983.
\newblock \mathscinet{MR696688}, \doi{10.1007/BF01389127}.

\bibitem[JP17]{MR3687214}
Corey Jones and David Penneys.
\newblock Operator algebras in rigid {$\rm C^*$}-tensor categories.
\newblock {\em Comm. Math. Phys.}, 355(3):1121--1188, 2017.
\newblock \mathscinet{MR3687214} \doi{10.1007/s00220-017-2964-0}
  \arxiv{1611.04620}.

\bibitem[JS97]{MR1473221}
Vaughan~F.R. Jones and V.S. Sunder.
\newblock {\em Introduction to Subfactors}.
\newblock London Mathematical Society Lecture Note Series, 234. Cambridge
  University Press, Cambridge, 1997.
\newblock xii+162 pp. ISBN: 0-521-58420-5, \mathscinet{MR1473221}.

\bibitem[KW00]{MR1624182}
Tsuyoshi Kajiwara and Yasuo Watatani.
\newblock Jones index theory by {H}ilbert {$C^*$}-bimodules and {$K$}-theory.
\newblock {\em Trans. Amer. Math. Soc.}, 352(8):3429--3472, 2000.
\newblock \mathscinet{MR1624182}, \doi{10.1090/S0002-9947-00-02392-8}.

\bibitem[Ocn88]{MR996454}
Adrian Ocneanu.
\newblock Quantized groups, string algebras and {G}alois theory for algebras.
\newblock In {\em Operator algebras and applications, Vol.\ 2}, volume 136 of
  {\em London Math. Soc. Lecture Note Ser.}, pages 119--172. Cambridge Univ.
  Press, Cambridge, 1988.
\newblock \mathscinet{MR996454}.

\bibitem[PacOA]{pacheco2023classification}
Sergio~Girón Pacheco.
\newblock A classification of anomalous actions through model action
  absorption, arXiv: 2310.11537 [math.OA].

\bibitem[Phi83]{phillips2006simple}
N.~Christopher Phillips.
\newblock Every simple higher dimensional noncommutative torus is an {AT}
  algebra, arXiv: math/0609783.

\bibitem[Pop90]{MR1055708}
Sorin Popa.
\newblock Classification of subfactors: the reduction to commuting squares.
\newblock {\em Invent. Math.}, 101(1):19--43, 1990.
\newblock \mathscinet{MR1055708}, \doi{10.1007/BF01231494}.

\bibitem[Pop95]{MR1339767}
Sorin Popa.
\newblock {\em Classification of subfactors and their endomorphisms}, volume~86
  of {\em CBMS Regional Conference Series in Mathematics}.
\newblock Published for the Conference Board of the Mathematical Sciences,
  Washington, DC, 1995.
\newblock \mathscinet{MR1339767}.

\bibitem[RS87]{MR894590}
Jonathan Rosenberg and Claude Schochet.
\newblock The {K}\"unneth theorem and the universal coefficient theorem for
  {K}asparov's generalized {$K$}-functor.
\newblock {\em Duke Math. J.}, 55(2):431--474, 1987.
\newblock \mathscinet{MR894590}, \doi{10.1215/S0012-7094-87-05524-4}.

\bibitem[Sch20]{MR4079742}
Andrew Schopieray.
\newblock Lie theory for fusion categories: a research primer.
\newblock In {\em Topological phases of matter and quantum computation}, volume
  747 of {\em Contemp. Math.}, pages 1--26. Amer. Math. Soc., [Providence], RI,
  [2020] \copyright 2020.

\bibitem[Trile]{244563}
Todd Trimble.
\newblock Linear independence of $\sin(k \pi / m)$, 2017,
  https://mathoverflow.net/users/2926/todd-trimble.

\end{thebibliography}
\end{document}